\documentclass[11pt]{article}

\usepackage[left=1.2in,right=1.2in]{geometry}

\usepackage[utf8]{inputenc}
\usepackage[T1]{fontenc}
\usepackage{ifthen}
\usepackage{amssymb}
\usepackage{amsthm} 
\usepackage{bbm}
\usepackage{enumitem}

\usepackage{amsmath}
\usepackage{blkarray}

\usepackage[all]{xy}

\usepackage{xcolor}
\definecolor{notecolor}{rgb}{1,0,0}

\usepackage{tikz}
\usepackage{tkz-graph}
\usepackage{tikz-cd}
\usetikzlibrary{arrows.meta, positioning}
\usepackage{xparse}
\usepackage{mathrsfs}
\usepackage{graphicx}
\usepackage{hyperref}

\definecolor{linkcolor}{rgb}{0,0,0.8} 

{
\theoremstyle{plain}
\newtheorem{theorem}{Theorem}[section]
\newtheorem{lemma}[theorem]{Lemma}
\newtheorem{proposition}[theorem]{Proposition}
\newtheorem{corollary}[theorem]{Corollary}
\newtheorem{example}[theorem]{Example}
\newtheorem{definition}[theorem]{Definition}
\newtheorem{conjecture}[theorem]{Conjecture}
\newtheorem*{conjecture*}{Conjecture}

}

{
\theoremstyle{definition}

\newtheorem{remark}[theorem]{Remark}
}

\newcommand{\sizedescriptor}[2]
{
\ifthenelse{\equal{#1}{0}}{}{
\ifthenelse{\equal{#1}{1}}{\big}{
\ifthenelse{\equal{#1}{2}}{\Big}{
\ifthenelse{\equal{#1}{3}}{\bigg}{
\ifthenelse{\equal{#1}{4}}{\Bigg}{
#2}}}}}
}

\newcommand{\proven}[1]{\underline{#1} \\ \ \vspace{-2ex} \\}

\newcommand{\impl}{\Rightarrow}  
\newcommand{\all}[1]{\forall #1 .\,}  

\newcommand{\df}[1]{\emph{\textbf{#1}}}  

\NewDocumentCommand{\set}
	{O{auto} m G{\empty}}
	{\sizedescriptor{#1}{\left}\{ {#2} \ifthenelse{\equal{#3}{}}{}{ \; \sizedescriptor{#1}{\middle}| \; {#3}} \sizedescriptor{#1}{\right}\}}

\newcommand{\pst}{\mathcal{P}}  

\newcommand{\NN}{\mathbb{N}}

\newcommand{\RR}{\mathbb{R}}

\newcommand{\intoo}[3][\RR]{{#1}_{(#2, #3)}}
\newcommand{\intcc}[3][\RR]{{#1}_{[#2, #3]}}

\newcommand{\intco}[3][\RR]{{#1}_{[#2, #3)}}

\newcommand{\er}{\overline{\RR}}

\NewDocumentCommand{\oball}  
	{O{\empty} G{\empty} G{\empty}}
	{B_{#1}\ifthenelse{\equal{#2}{}}{}{\!\left(#2, #3\right)}}
\NewDocumentCommand{\cball}  
	{O{\empty} G{\empty} G{\empty}}
	{\overline{B}_{#1}\ifthenelse{\equal{#2}{}}{}{\!\left(#2, #3\right)}}
\NewDocumentCommand{\cth}  
	{O{\empty} G{\empty} G{\empty}}
	{\overline{\mathrm{th}}_{#1}\ifthenelse{\equal{#2}{}}{}{\!\left(#2, #3\right)}}

\newcommand{\rstr}[1]{\left.{#1}\right|}  
\newcommand{\parto}{\mathrel{\rightharpoonup}}  
\newcommand{\prj}{\mathrm{pr}}
\NewDocumentCommand{\dimg}  
	{O{\empty} m G{\empty}}
	{{#2}_*\ifthenelse{\equal{#3}{}}{}{\!\sizedescriptor{#1}{\left}( {#3} \sizedescriptor{#1}{\right})}}
\NewDocumentCommand{\pimg}  
	{O{\empty} m G{\empty}}
	{{#2}^*\ifthenelse{\equal{#3}{}}{}{\!\sizedescriptor{#1}{\left}( {#3} \sizedescriptor{#1}{\right})}}

\usepackage{mathtools}
\mathtoolsset{centercolon}

\newcommand{\mg}[1][]{\mathsf{mag}_{#1}}

\newcommand{\fsub}{\mathrm{Fin}}
\newcommand{\ifsub}{\fsub_{+}}
\newcommand{\csub}{\mathrm{Cmp}}
\newcommand{\icsub}{\csub_{+}}

\newcommand{\card}[1]{\mathnormal{\#}{#1}}

\newcommand{\sgn}{\mathrm{sgn}}
\newcommand{\decr}{\mathop{\text{\tiny$\searrow$}}}

\newcommand{\cc}{\overline{C}}
\newcommand{\skw}{\mathrm{skew}}
\newcommand{\cn}[2][F]{{\urcorner}{#1}_{#2}}

\title{Continuity of Magnitude at Skew Finite Subsets of $\ell_1^N$}

\author{
Sara Kali\v{s}nik\thanks{
Pennsylvania State University,
\texttt{skalisnik@psu.edu}}\phantom{x} and 
Davorin Le\v{s}nik\thanks{
University of Ljubljana,
\texttt{davorin.lesnik@fmf.uni-lj.si}}
}

\date{}
\begin{document}

\maketitle

\begin{abstract}
Magnitude is an isometric invariant of metric spaces introduced by Leinster. Although magnitude is nowhere continuous on the Gromov–Hausdorff space of finite metric spaces, continuity results are possible if we restrict the ambient space.
In this paper, we focus on~$\ell_1^N$ and prove that magnitude is continuous at every skew finite subset of~$\ell_1^N$, that is, at every finite set whose coordinate projections are injective.
For such sets, we analyze cubical thickenings and derive an explicit formula for their weight measures. This yields a formula for the magnitude of these thickenings, which we use to prove that their magnitude converges to that of the underlying finite set.
Since skew finite subsets of~$\ell_1^N$ form an open and dense subset of the space of all finite subsets, magnitude is continuous on an open dense subset of the space of finite subsets of $\ell_1^N$.
\end{abstract}

\tableofcontents

\section{Introduction}

Magnitude is a cardinality-like invariant of metric spaces introduced by Leinster in the 2010s~\cite{leinster2010}. Originally arising in category theory as an extension of Euler characteristic to enriched categories, it was soon recognized as a geometric invariant of compact metric spaces~\cite{leinster2010, LW13, Willerton2014, W09, meckes2013}. 
For example, it has been shown to encode volume, capacity, dimension, and intrinsic volumes~\cite{leinster2010, Meckes2015, Willerton2014, LW13}.
It has also found applications in diverse areas: in ecology as a quantitative measure of biodiversity~\cite{Solow1994, LC12}, in machine learning as a descriptor of intrinsic diversity in latent representations~\cite{magnitude-ML}, and in topological data analysis through connections with persistent homology~\cite{O18, OMALLEY2023107396, GH21}. 

From both theoretical and applied perspectives, understanding the continuity properties of magnitude is of central importance. 
When one considers the space of isometry classes of finite metric spaces equipped with the Gromov--Hausdorff metric, magnitude fails to be continuous~\cite{leinster2010, Roff25}. In fact, it is shown in~\cite{katsumasa2025magnitudegenericallycontinuousfinite} that magnitude is nowhere continuous on this space. Nevertheless, there are indications that when restricting to suitable subclasses of metric spaces, magnitude is continuous.

In this paper, the subclass we focus on are subspaces of~$\ell_1^N$. There are two reasons for this. The first is that this is an interesting and important class of spaces in its own right, especially in the context of magnitude which interacts particularly nicely with the $1$-metric (for example, the magnitude of the $1$-product of two metric spaces is the product of their magnitudes~\cite[Proposition 2.3.6]{leinster2010}). The second is that a proof of continuity of magnitude in spaces $\ell_1^N$ may allow for generalization to a far greater class of spaces. Recall that in~\cite[Theorem~3.1]{leinster2023spaces}, Leinster and Meckes prove the one-point property (essentially a special case of continuity) for all metric spaces that can be isometrically embedded into a finite-dimensional vector subspace of~$L_1$. Their result covers many of the one-point property results that appear in the literature for specific classes of metric spaces (such as Euclidean spaces). The proof proceeds by reducing the problem to the one-point property of subspaces of~$\ell_1^N$. It is our hope that something similar could be done for continuity in general, i.e.\ that proving (a sufficiently strong form of) continuity of magnitude in all~$\ell_1^N$ would imply continuity in all finite-dimensional subspaces of~$L_1$.

Our first contribution is a general criterion that reduces continuity of magnitude at a compact set to a one-sided estimate along metric thickenings. More precisely, in any tractable metric space~$M$ (in the sense of~\cite{kalisnik2025tractablemetricspacescontinuity}), continuity of magnitude at compact $K \subseteq M$ is equivalent to the convergence
\[
\lim_{r \decr 0} \mg\big(\cth[M]{K}{r}\big) = \mg(K)
\]
and, equivalently, to the existence of arbitrarily small $r > 0$ with $\mg\big(\cth[M]{K}{r}\big)$ arbitrarily close to $\mg(K)$ from above (Lemma~\ref{lemma:magnitude-point-continuity-characterization-in-tractable-spaces}). 

We then specialize to $M = \ell_1^N$, where it is technically convenient to work with cubical thickenings in the $\infty$-metric: for $p \in \ell_1^N$ and $r > 0$ we write $\cc_p(r) := \cball[\infty]{p}{r}$ and, for a finite set $F \subseteq \ell_1^N$, we set $\cc_F(r) := \bigcup_{p \in F} \cc_p(r)$. Our main technical result is an explicit description of the weight measure of~$\cc_F(r)$ when $F$ is \df{skew}, i.e.\ when the coordinate projections $\prj_k|_F$ are injective. In this case, for all sufficiently small~$r$ the cubes in $\cc_F(r)$ have pairwise disjoint coordinate projections, and we prove that $\cc_F(r)$ admits a weight measure of the form
\[
\omega_{\cc_F(r)}
\ = \
\Big(\tfrac{1}{2^N} \sum_{D = 0}^N \lambda_{\cc_F(r)^{(D)}}^D\Big)
\ -\!\!
\sum_{\substack{p \in F \\ s \in \set{-1, 1}^N}}\!\!\!\!\alpha_{p, s}(r)\,\delta_{p + r s},
\]
where the coefficients $\alpha_{p, s}(r)$ are determined by a linear system of equations indexed by the vertices of~$\cc_F(r)$ (Theorem~\ref{theorem:cubes-weight-measure}), $\cc_F(r)^{(D)}$ denotes the union of the $D$-dimensional faces of the cubes in $\cc_F(r)$, $\lambda_{\cc_F(r)^{(D)}}^D$ is the $D$-dimensional Lebesgue measure restricted to these faces, and $\delta_{p + r s}$ denotes the Dirac measure at the vertex $p + r s$.

This yields an explicit formula for $\mg\big(\cc_F(r)\big)$. Using this cube weight measure formula, we deduce that
\[
\lim_{r \decr 0} \mg\big(\cc_F(r)\big) = \mg(F)
\qquad
\text{for every skew finite $F \subseteq \ell_1^N$}.
\]
Hence, magnitude in~$\ell_1^N$ is continuous at every skew finite subset (Theorem~\ref{theorem:skew-magnitude-continuity}). Since skew finite subsets of~$\ell_1^N$ form an open and dense subset in the set of all finite subsets of~$\ell_1^N$, the finite space version of magnitude in~$\ell_1^N$ is continuous ``almost everywhere''.

\subsection{Notation}\label{sub:notation}

We denote the set of natural numbers by~$\NN$. We treat~$0$ as a natural number, so $\NN = \set{0, 1, 2, 3,\ldots}$.

We denote the set of real numbers by~$\RR$, and the set of extended real numbers (with infinities included) by~$\er$. That is, $\er = \RR \cup \set{-\infty, \infty}$.

Subsets, given by a relation, are denoted by that relation in the index; for example, $\RR_{\geq 0}$ is the set of non-negative real numbers. Intervals between two numbers are denoted by these two numbers in brackets and in the index. Round, or open, brackets $(\ )$ denote the absence of the boundary in the set, and square, or closed, brackets $[\ ]$ its presence. For example, $\intcc{0}{1} = \set{x \in \RR}{0 \leq x \leq 1}$ is the usual closed unit interval, and $\intco[\NN]{5}{10} = \set{n \in \NN}{5 \leq n < 10} = \set{5, 6, 7, 8, 9}$.

For any set~$A$, we denote its cardinality by~$\card{A}$.

The powerset of a set~$A$ (the set of all subsets of~$A$) is denoted by~$\pst(A)$. We denote the set of finite subsets of~$A$ by~$\fsub(A)$, and the set of non-empty finite subsets by~$\ifsub(A)$. If $A$ is also equipped with a topology, then $\csub(A)$ denotes the set of compact subspaces of~$A$, and $\icsub(A)$ the set of non-empty compact subspaces of~$A$.

A function $f$ mapping from a set~$A$ to a set~$B$ is denoted as $f\colon A \to B$. If $f$ is merely a partial map (not necessarily defined on the whole~$A$), we write this as $f\colon A \parto B$.


Following the notation from~\cite{leinster2023spaces}, for any $p \in \er_{\geq 1}$ and $N \in \NN$, let $\ell_p^N$ denote the Banach space~$\RR^N$, equipped with the $p$-norm (and the induced $p$-metric), and let $\ell_p$ denote the infinite-dimensional version of this space. Moreover, $L_1$ is the shorthand for $L_1\big(\intcc{0}{1}, \RR\big)$, i.e.\ the space of (equivalence classes of) measurable functions $\intcc{0}{1} \to \RR$, equipped with the usual integral $1$-norm.

Given a metric space~$M$ with a metric~$d$,
\begin{itemize}
\item
the open ball in~$M$ with the center in $x \in M$ and radius $r$ is denoted by $\oball[M]{x}{r}$, likewise for the closed ball $\cball[M]{x}{r}$ (and we shorten the notation for a ball in $p$-metric to $\oball[p]{x}{r}$, resp.~$\cball[p]{x}{r}$);
\item
the (closed) $r$-thickening of~$A$ in~$M$ for a subset $A \subseteq M$ and $r \in \RR_{\geq 0}$ is denoted as
\[\cth[M](A, r) := \set{x \in X}{d(x, A) \leq r}.\]
\end{itemize}

For any $p \in \RR^N$ and $r \in \RR_{> 0}$, we denote the (closed) cube with the center in~$p$ and ``radius''~$r$ by
\[\cc_p(r) \ := \ \cball[\infty]{p}{r} \ =\!\!\prod_{k \in \intcc[\NN]{1}{N}}\!\!\intcc{p_k - r}{p_k + r}.\]

We write $\displaystyle{\lim_{x \decr a} f(x)}$ for the right-sided limit (i.e.\ the limit when $x$ approaches~$a$ from above).

We use~$1_m$ to denote the vector that has $m$~components, all of which are equal to~$1$.

For any matrix~$A$, we write $\mathrm{sum}(A)$ for the sum of all its entries. This includes matrices with a single column, i.e.\ $\mathrm{sum}(v)$ is the sum of all components of a vector~$v$.

\section{Preliminaries}\label{section:prelims}

In this section we review some basic definitions and standard results concerning the magnitude of finite and compact (subspaces of) metric spaces~\cite{leinster2010, meckes2013}.

\subsection{The Magnitude of Finite Metric Spaces}\label{subsection:finite-metric-space-magnitude}

In this subsection, we recall the definition of magnitude of a finite metric space~\cite{leinster2010}.

\begin{definition}\label{definition:finite-metric-space-magnitude}
Let $F$ be a finite set, equipped with a metric~$d$, and let $m := \card{F}$. The matrix $Z_F \in \RR^{m \times m}$, given by $Z_F := \big(e^{-d(p, q)}\big)_{p, q \in F}$, is called the \df{similarity matrix} of~$F$. A vector $w \in \RR^m$ is a \df{weighting} for $Z_F$ when $Z_F \cdot w = 1_m$, where $1_m \in \RR^m$ is the vector with all components equal to~$1$. If a weighting~$w$ for $Z_F$ exists, the \df{magnitude} of the metric space~$F$ is defined to be
\[\mg(F) := \mathrm{sum}(w),\]
i.e.\ the sum of all components of the weighting~$w$ (this sum is independent of the choice of the weighting~\cite{leinster2010}).
\end{definition}

\begin{remark}
If $Z_F$ is invertible, there exists a unique weighting $w = Z_F^{-1} \cdot 1_{n}$, so the magnitude of~$F$ is defined and $\mg(F) = \mathrm{sum}(Z_F^{-1} \cdot 1_{n}) = \mathrm{sum}(Z_F^{-1})$ (the latter denotes the sum of all entries of the matrix~$Z_F^{-1}$).
\end{remark}

The notation that we use, i.e.~$\mg(F)$ as the magnitude of a finite metric space $F$, is a deviation from the more common notation~$|F|$, but $|F|$ clashes with the notation for the absolute value --- something which we very much use in this paper, given that we primarily consider spaces, equipped with the $1$-metric.

For any metric space~$M$, we can study the magnitude of all its finite subspaces. This gives us a partial function $\mg\colon \fsub(M) \parto \RR$ (where $\fsub(M)$ denotes the set of all finite subsets of~$M$). 
We consider $\fsub(M)$ to be equipped with the Hausdorff metric~$d_H$. This is only an extended metric, however, since the empty subset is at infinite distance from the others. For convenience, we restrict ourselves to~$\ifsub(M)$, i.e.\ the set of all \emph{non-empty} subsets of~$M$, where the Hausdorff metric is indeed a metric. This does not change any discussion on continuity: since $\emptyset$ is an isolated point in~$\fsub(M)$, any (partial) map with domain $\fsub(M)$ is continuous if and only if its restriction to~$\ifsub(M)$ is.

 To avoid lengthy notation when considering magnitude on different domains, we will denote that by putting the domain in the index, like this: $\mg[\fsub(M)]$, $\mg[\ifsub(M)]$.

\subsection{Positive Definite Metric Spaces}

Not every similarity matrix has a weighting, so the magnitude is not defined for every finite metric space.
One way to guarantee that a finite metric space has magnitude is to require that it has a positive definite similarity matrix; such a metric space is also called positive definite. It follows from Sylvester's criterion that its every subspace has this property as well. Hence, the following is a generalization of this property to not necessarily finite metric spaces.

\begin{definition}
A metric space~$M$ is \df{positive definite} when its every finite subspace has a positive definite similarity matrix.
\end{definition}

Magnitude has many additional desirable properties if we restrict ourselves to positive definite metric spaces~\cite{meckes2013}. If $M$ is positive definite, the similarity matrix $Z_F$ is invertible for every finite subspace of $M$, so the maps $\mg[\fsub(M)]$ and $\mg[\ifsub(M)]$ are total. Additionally, magnitude is \df{inclusion-monotone}, i.e.\ for all $F', F'' \in \fsub(M)$, if $F' \subseteq F''$, then $\mg(F') \leq \mg(F'')$. For the proof of this fact, see~\cite[
Corollary 2.4.4]{leinster2010}.

\subsection{The Magnitude of Compact Metric Spaces}

Because of inclusion-monotonicity of magnitude, there is a natural extension of the definition of magnitude to compact positive definite metric spaces. The idea is that a metric compact is totally bounded, and can thus be arbitrarily well approximated with finite subsets from below.

\begin{definition}\label{definition:compact-metric-space-magnitude}
Let $K$ be a compact positive definite metric space. Then its \df{magnitude} is defined as
\[\mg(K) := \sup\set[1]{\mg[\fsub(K) ](F)}{F \in \fsub(K)}.\]
\end{definition}

We thus get a partial\footnote{Even though $\mg[\fsub(M)]$ is total for positive definite~$M$, the supremum might be infinite, so $\mg(K)$ might not be a real number~\cite[Theorem~2.1]{leinster2023spaces}.} map $\mg\colon \csub(M) \parto \RR$ (or $\mg[\csub(M)]$ for short) where $\csub(M)$ stands for the set of all compact subspaces of~$M$. This set can again be equipped with the Hausdorff metric, or rather, it becomes a metric when we restrict ourselves to $\icsub(M)$ (the set of all \emph{non-empty} compact subspaces of~$M$). Again, because $\emptyset$ is an isolated point, there is no difference between continuity of~$\mg[\csub(M)]$ and~$\mg[\icsub(M)]$.

In principle, there is nothing stopping us to take this as the definition of the magnitude for any metric compact, but for general (non-positive definite) spaces, this is poorly behaved. In particular, for a finite metric space which is not positive definite, Definitions~\ref{definition:finite-metric-space-magnitude} and~\ref{definition:compact-metric-space-magnitude} might not yield the same magnitude~\cite{meckes2013}.

\begin{example}\label{example:interval-magnitude}
Let $a, b \in \RR$, $a \leq b$. The magnitude of the interval~$\intcc{a}{b}$~\cite[Theorem~7]{LW13} with respect to the standard Euclidean metric is given by
\[
\mg(\intcc{a}{b}) = 1 + \frac{b - a}{2}.
\]
More generally~\cite[Corollary~5.4.3]{LM17}, if \( A = \bigcup_{i \in \intcc[\NN]{1}{n}} \intcc{a_i}{b_i} \subseteq \mathbb{R} \) is a finite union of closed bounded intervals such that $a_i \leq b_i < a_{i+1} \leq b_{i+1}$ for all $i \in \intco[\NN]{1}{n}$, then the magnitude is given by
\[
\mg(A) = 1 + \sum_{i \in \intcc[\NN]{1}{n}} \frac{b_i - a_i}{2} + \sum_{i \in \intco[\NN]{1}{n}} \tanh\big(\frac{a_{i+1} - b_i}{2}\big).
\]
\end{example}

Other definitions of magnitude for compact metric spaces have appeared in the literature~\cite{leinster2010, LW13, Willerton2014, W09, meckes2013}. They have been shown to be equivalent for positive definite spaces, but not general ones~\cite{meckes2013}. One of these approaches, relevant for our manuscript, is via weight measures~\cite[Section 2.1]{Willerton2014}.

\begin{definition}
Let $(K, d)$ be a compact positive definite metric space. 
A finite signed Borel measure $\omega$ on $K$ is a \df{weight measure} 
when it satisfies
\[
\int_K e^{-d(x, a)} \, d\omega(x) = 1
\quad \text{for all } a \in K.
\]
If a compact metric space $K$ admits a weight measure $\omega$, 
then its magnitude is given by
\[
\mg(K) = \omega(K) = \int_K d\omega(x).
\]
\end{definition}

In the Euclidean setting, weight measures can frequently be written as linear combinations of Lebesgue and Dirac measures. Since one of the key statements of this manuscript is an explicit formula for the weight measures of unions of cubes in $\RR^N$, we now introduce notation for skeleta of polytopes and the corresponding Lebesgue and Dirac measures.

Let $P \subseteq \RR^N$ be a polytope (i.e.\ the convex hull of a finite set) or, more generally, a finite disjoint union of polytopes. For any $D \in \NN_{\leq N}$, let $P^{(D)}$ denote the $D$-skeleton of~$P$ (i.e.\ the union of all $D$-dimensional faces of~$P$).

Let $\lambda_S^D$ denote the $D$-dimensional Lebesgue measure on a measurable set $S \subseteq \RR^N$. In particular, we may consider $\lambda_{P^{(D)}}^D$, i.e.\ the $D$-dimensional Lebesgue measure on the $D$-skeleton of a (finite disjoint union of) polytope(s)~$P$.

For any point $p \in \RR^N$, let $\delta_p$ denote the Dirac measure at~$p$. Note that we have $\delta_p = \lambda_{\set{p}}^0$. More generally, if $F \in \fsub(\RR^N)$, then $\lambda_F^0 = \sum_{p \in F} \delta_p$.

\begin{example}\cite[
Theorem 2]{Willerton2014}\label{ex:weight-interval}
Let $a, b \in \RR$, $a \leq b$. The weight measure of the interval $\intcc{a}{b}$ is equal to 
\[
\omega_{\intcc{a}{b}} = \tfrac{1}{2} \big(\delta_a + \lambda_{\intcc{a}{b}}^1 + \delta_b\big).
\]
Note that the formula for the interval weight measure can be rewritten as $\omega_{\intcc{a}{b}} = \tfrac{1}{2} \big(\lambda_{\intcc{a}{b}^{(0)}}^0 + \lambda_{\intcc{a}{b}^{(1)}}^1\big)$.
\end{example}

\section{Magnitude Continuity via Thickenings}

In this section we recall some definitions and prove a general result, which we will later use for the main results of this paper. 

It is known that magnitude is in general not continuous, and it is a subject of active research to determine when it is. In~\cite{kalisnik2025tractablemetricspacescontinuity} we introduced a class of metric space, called tractable metric spaces, which we consider convenient for study of continuity of magnitude. We recall the definition here (for more details, how this is connected with magnitude continuity, see the aforementioned paper).

\begin{definition}
A metric space~$M$ is \df{tractable} when it is positive definite and every closed ball in it is compact and has finite magnitude.
\end{definition}

If $M$ is tractable, then $\mg[\csub(M)]$ and~$\mg[\icsub(M)]$ are total maps.

In any metric space~$M$ with a metric~$d$, the (closed) $r$-thickening of a subset $S \subseteq M$ is defined as
\[\cth[M]{S}{r} := \set[1]{x \in M}{d(S, x) \leq r}.\]
If $M$ is tractable, then for every compact $K \subseteq M$ and every $r \in \RR_{\geq 0}$ the thickening $\cth[M]{K}{r}$ is also compact and thus has (real-valued) magnitude~\cite[Lemma 3.1]{kalisnik2025tractablemetricspacescontinuity}.

We can use thickenings in tractable spaces to give the following characterization of continuity of magnitude at a point.

\begin{lemma}\label{lemma:magnitude-point-continuity-characterization-in-tractable-spaces}
Let $M$ be a tractable metric space and $K \in \icsub(M)$. The following statements are equivalent.
\begin{enumerate}[label=(\roman*)]
\item
The map $\mg[\icsub(M)]$ is continuous at~$K$.
\item
$\displaystyle{\lim_{r \decr 0} \mg\big(\cth[M]{K}{r}\big) = \mg(K)}$
\item
For every $\epsilon \in \RR_{> 0}$ there exists $\delta \in \RR_{> 0}$ such that
\[\mg\big(\cth[M]{K}{\delta}\big) < \mg(K) + \epsilon.\]
\end{enumerate}
\end{lemma}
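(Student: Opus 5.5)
We will prove the cycle (i)$\Rightarrow$(ii)$\Rightarrow$(iii)$\Rightarrow$(i). Two elementary facts about thickenings are used throughout. First, for compact $K$ and $r \geq 0$ we have $d_H\big(K, \cth[M]{K}{r}\big) \leq r$. Second, every compact $L$ with $d_H(K, L) < r$ satisfies $L \subseteq \cth[M]{K}{r}$. We also use that in a positive definite space magnitude of compacts is inclusion-monotone. This follows directly from Definition~\ref{definition:compact-metric-space-magnitude} as a supremum over finite subsets, together with inclusion-monotonicity for finite sets: if $K \subseteq L$ then $\fsub(K) \subseteq \fsub(L)$. Since $M$ is tractable, all the magnitudes involved are real numbers.

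\textbf{(i)$\Rightarrow$(ii).} Since $d_H\big(K, \cth[M]{K}{r}\big) \leq r \to 0$, continuity at $K$ gives $\mg\big(\cth[M]{K}{r}\big) \to \mg(K)$ as $r \decr 0$.

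\textbf{(ii)$\Rightarrow$(iii).} This is immediate: for a given $\epsilon$, take any sufficiently small $\delta > 0$.

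\textbf{(iii)$\Rightarrow$(i).} This is the substantive direction. We need two-sided control of $\mg(L)$ for $L$ Hausdorff-close to $K$.

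\emph{Upper bound.} Given $\epsilon$, choose $\delta$ as in (iii). For $L \in \icsub(M)$ with $d_H(K, L) < \delta$ we have $L \subseteq \cth[M]{K}{\delta}$, so by monotonicity
\[\mg(L) \leq \mg\big(\cth[M]{K}{\delta}\big) < \mg(K) + \epsilon.\]

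\emph{Lower bound.} This is lower semicontinuity of magnitude, and we prove it by finite approximation.
\begin{enumerate}[label=(\alph*)]
\item By Definition~\ref{definition:compact-metric-space-magnitude}, pick a finite $F \subseteq K$ with $\mg(F) > \mg(K) - \epsilon/2$.
\item Magnitude of finite sets of fixed cardinality is continuous in the points. Indeed, $Z_F$ is positive definite, hence invertible, and $\mathrm{sum}(Z^{-1})$ depends continuously on the entries $e^{-d(p,q)}$. Choose $\eta > 0$, smaller than half the minimal distance between distinct points of $F$, such that any $F'$ obtained by moving each point of $F$ by less than $\eta$ satisfies $\mg(F') > \mg(F) - \epsilon/2$. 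The bound on $\eta$ ensures that the perturbed points stay distinct.
\item If $d_H(K, L) < \eta$, then each $p \in F$ has some $p' \in L$ with $d(p, p') < \eta$. The resulting $F' = \{p'\}$ is a finite subset of $L$, so $\mg(L) \geq \mg(F') > \mg(K) - \epsilon$.
\end{enumerate}

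Taking $d_H(K, L) < \min(\delta, \eta)$ gives $|\mg(L) - \mg(K)| < \epsilon$, which proves (i).

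The main obstacle is only the lower-semicontinuity step, specifically justifying that finite-set magnitude depends continuously on the point positions. This rests on invertibility, which holds by positive definiteness. The monotonicity step is straightforward, because it follows from the supremum definition of magnitude.
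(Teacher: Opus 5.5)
Your proof is correct and follows the same cycle (i)$\Rightarrow$(ii)$\Rightarrow$(iii)$\Rightarrow$(i) as the paper, including the identical upper-bound argument via $L \subseteq \cth[M]{K}{\delta}$ and inclusion-monotonicity. The only difference is that you prove lower semicontinuity directly, by finite approximation together with continuity of $\mathrm{sum}(Z^{-1})$ near an invertible similarity matrix, where the paper simply cites Meckes~\cite[Theorem 2.6]{meckes2013}.
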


\begin{proof}
\
\begin{itemize}
\item\proven{$(i \impl ii)$}
Observe that $0 \leq d_H\big(\cth[M]{K}{r}, K\big) \leq r$ for all $r \in \RR_{\geq 0}$, so $\lim_{r \decr 0} \cth[M]{K}{r} = K$ in~$\icsub(M)$. Since continuous maps preserve limits, the claim follows.
\item\proven{$(ii \impl iii)$}
Immediate from the definition of a limit.
\item\proven{$(iii \impl i)$}
Take any $\epsilon \in \RR_{> 0}$. By assumption there exists $\delta' \in \RR_{> 0}$ such that $\mg\big(\cth[M]{K}{\delta'}\big) < \mg(K) + \epsilon$. Let $L \in \icsub(M)$ be such that $d_H(K, L) < \delta'$. Since $L \subseteq \cth[M]{K}{d_H(K, L)} \subseteq \cth[M]{K}{\delta'}$, by inclusion-monotonicity of magnitude we get $\mg(L) < \mg(K) + \epsilon$. On the other hand, since magnitude is lower semicontinuous in positive definite (in particular tractable) metric spaces~\cite[Theorem 2.6]{meckes2013}, there exists $\delta'' \in \RR_{> 0}$ such that if $d_H(K, L) < \delta''$, then $\mg(L) > \mg(K) - \epsilon$. In conclusion, whenever $d(K, L) < \delta := \min\set{\delta', \delta''}$, we have $\big|\mg(K) - \mg(L)\big| < \epsilon$ which proves continuity of $\mg[\icsub(M)]$ at~$K$.
\end{itemize}
\end{proof}

\section{Cubes and Skew Finite Subsets of $\ell_1^N$}

Let us apply Lemma~\ref{lemma:magnitude-point-continuity-characterization-in-tractable-spaces} now for the special case of finite subsets of spaces~$\ell_1^N$ (which are tractable~\cite[Proposition 3.3]{kalisnik2025tractablemetricspacescontinuity}). The $r$-thickening of a finite $F \in \fsub(M)$ is just the finite union of closed balls with centers in the points of~$F$ and radius~$r$; in particular, for $F \in \fsub(\ell_1^N)$, it is the finite union of $1$-balls (diamonds). However, it is much easier to work with $\infty$-balls (cubes) in~$\ell_1^N$, hence we will phrase a sufficient condition for continuity in terms of cubes. But first, some notation.

\begin{definition}
For any $p = (p_k)_{k \in \intcc[\NN]{1}{N}} \in \ell_1^N$ and $r \in \RR_{\geq 0}$ we define the \df{(closed) cube}
\[\cc_p(r) \ := \ \cball[\infty]{p}{r} \ =\!\!\prod_{k \in \intcc[\NN]{1}{N}}\!\!\intcc{p_k - r}{p_k + r}.\]
\end{definition}
Here $\cball[\infty]$ stands for the closed ball in the $\infty$-metric, although we emphasize that we use this only as a convenient notation: this is still meant to be a subspace of~$\ell_1^N$, i.e.\ the distances between points are given by the $1$-metric.

The magnitude of a single cube can be computed quickly using the product formula for magnitude and Examples~\ref{example:interval-magnitude}, \ref{ex:weight-interval}.
\begin{example}\label{example:cube-weight-measure}
For any two metric spaces $M'$, $M''$ with the well-defined magnitude we have $\mg(M' \times_1 M'') = \mg(M') \mg(M'')$~\cite[Proposition 2.3.6]{leinster2010} where $\times_1$ denotes the $1$-product of metric spaces. Likewise, if $M'$, $M''$ have a weight measure, then the weight measure of $M' \times_1 M''$ is given as the product of the weight measures of $M'$ and~$M''$~\cite[Proposition 5.3.7.]{LM17}. Hence, for $p \in \ell_1^N$ and $r \in \RR_{> 0}$ we have $\mg\big(\cc_p(r)\big) = (1 + r)^N$ and the weight measure of the cube $\cc_p(r)$ is
\begin{align*}
\omega_{\cc_p(r)} \ &= \ \tfrac{1}{2^N}\!\!\prod_{k \in \NN_{\leq N}}\!\!\big(\delta_{p_k - r} + \lambda_{\intcc{p_k - r}{p_k + r}}^1 + \delta_{p_k + r}\big) \ = \ \tfrac{1}{2^N}\!\!\sum_{f \in \mathrm{faces}(\cc_p(r))}\!\!\lambda_f^{\dim(f)} \ = \\
&= \ \tfrac{1}{2^N}\!\!\prod_{k \in \NN_{\leq N}}\!\!\big(\lambda_{\intcc{p_k - r}{p_k + r}^{(0)}}^0 + \lambda_{\intcc{p_k - r}{p_k + r}^{(1)}}^1\big) \ = \ \tfrac{1}{2^N}\!\!\sum_{D \in \NN_{\leq N}}\!\!\lambda_{\cc_p(r)^{(D)}}^D.
\end{align*}
\end{example}

For any $F \in \fsub(\ell_1^N)$, let $\cc_F(r) := \bigcup_{p \in F} \cc_p(r)$. Clearly, $\cc_F(r)$ is compact, and if $F$ is non-empty, so is~$\cc_F(r)$.

\begin{corollary}\label{corollary:continuity-of-magnitude-with-cubes}
For any $N \in \NN$ and $F \in \ifsub(\ell_1^N)$, if
\[\lim_{r \decr 0} \mg\big(\cc_F(r)\big) = \mg(F),\]
then $\mg[\icsub(\ell_1^N)]$ is continuous at~$F$.
\end{corollary}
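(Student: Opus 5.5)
We reduce the statement to criterion (iii) of Lemma~\ref{lemma:magnitude-point-continuity-characterization-in-tractable-spaces}. This lemma applies because $\ell_1^N$ is tractable~\cite[Proposition 3.3]{kalisnik2025tractablemetricspacescontinuity} and $F$, being finite and non-empty, lies in $\icsub(\ell_1^N)$. The approach is to sandwich the $1$-thickening of~$F$ inside the cubical thickening and then use inclusion-monotonicity.

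The first step is the elementary comparison of norms on~$\RR^N$: $\|x\|_\infty \leq \|x\|_1$ for all $x$. Hence every point at $1$-distance at most~$r$ from some $p \in F$ is also at $\infty$-distance at most~$r$ from~$p$. It follows that
\[\cth[\ell_1^N]{F}{r} \ = \ \bigcup_{p \in F} \cball[1]{p}{r} \ \subseteq \ \bigcup_{p \in F} \cc_p(r) \ = \ \cc_F(r)\]
for every $r \in \RR_{> 0}$. Both sets are compact subsets of the positive definite space~$\ell_1^N$, with the metric on both being the $1$-metric. So inclusion-monotonicity of magnitude for compact positive definite spaces gives
\[\mg\big(\cth[\ell_1^N]{F}{r}\big) \leq \mg\big(\cc_F(r)\big).\]
This monotonicity is immediate from Definition~\ref{definition:compact-metric-space-magnitude}: a supremum over finite subsets of a smaller set cannot exceed the supremum over a larger one.

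Next, fix $\epsilon \in \RR_{> 0}$. By the hypothesis $\lim_{r \decr 0} \mg\big(\cc_F(r)\big) = \mg(F)$, there is $\delta \in \RR_{> 0}$ with $\mg\big(\cc_F(\delta)\big) < \mg(F) + \epsilon$. Combining this with the inclusion above yields $\mg\big(\cth[\ell_1^N]{F}{\delta}\big) < \mg(F) + \epsilon$. This is exactly condition (iii) of Lemma~\ref{lemma:magnitude-point-continuity-characterization-in-tractable-spaces}, so $\mg[\icsub(\ell_1^N)]$ is continuous at~$F$.

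There is no real obstacle here. The only point needing care is to keep the metric on $\cc_F(r)$ as the $1$-metric, so that both sets are subspaces of the same space and monotonicity applies. One should also note that $\mg(F)$ is the same whether $F$ is regarded as a finite space or as a compact one, which holds since $\ell_1^N$ is positive definite.
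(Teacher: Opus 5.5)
Your proof is correct and follows essentially the paper's argument: the inclusion $\cth[\ell_1^N]{F}{r} \subseteq \cc_F(r)$ plus inclusion-monotonicity, fed into Lemma~\ref{lemma:magnitude-point-continuity-characterization-in-tractable-spaces}. The only difference is that the paper also uses $F \subseteq \cth[\ell_1^N]{F}{r}$ to squeeze and verify condition (ii), whereas you verify the one-sided condition (iii) directly, so you do not need that lower bound.
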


\begin{proof}
For any $r \in \RR_{> 0}$ we have $F \subseteq \cth[\ell_1^N]{F}{r} \subseteq \cc_F(r)$, so by inclusion-monotonicity of magnitude
\[\mg(F) \leq \mg\big(\cth[\ell_1^N]{F}{r}\big) \leq \mg\big(\cc_F(r)\big).\]
Hence if $\lim_{r \decr 0} \mg\big(\cc_F(r)\big) = \mg(F)$, then also $\lim_{r \decr 0} \mg\big(\cth[\ell_1^N]{F}{r}\big) = \mg(F)$, so the claim follows from Lemma~\ref{lemma:magnitude-point-continuity-characterization-in-tractable-spaces}.
\end{proof}

When the projections of the cubes onto each coordinate axis are pairwise disjoint, it turns out that the magnitude of a finite union of such cubes admits a relatively simple explicit formula (Theorem~\ref{theorem:cubes-weight-measure}). This allows us to use the above result to prove continuity of $\mg[\icsub(\ell_1^N)]$ at ``almost all'' finite subspaces of~$\ell_1^N$, namely those around which small enough cubes have pairwise disjoint projections. We call such subspaces \df{skew} subspaces.

\begin{definition}
For $S \subseteq \RR^N$ define the \df{skewness} of $S$ by
\[\skw(S) := \inf\set{|a_k - b_k|}{a, b \in S \land a \neq b \land k \in \intcc[\NN]{1}{N}},\]
i.e.\ skewness is infimum of the differences between coordinates of distinct points in~$S$. Hence, if $S$ has at least two points, then $\skw(S) \in \RR_{\geq 0}$, whereas by convention, if $S$ is empty or a singleton, we take $\skw(S) := \infty$.

A subset $S \subseteq \RR^N$ is called \df{skew} when $\skw(S) > 0$.
\end{definition}

\begin{proposition}\label{proposition:skew-implies-discrete}
A skew subset of~$\RR^N$ is necessarily discrete (in the inherited topology). In particular, a compact skew set is finite.
\end{proposition}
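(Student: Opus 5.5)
The plan is to show directly that every point of a skew set $S \subseteq \RR^N$ is isolated in $S$, and then to use the standard fact that a compact discrete space is finite. We may assume $N \geq 1$. If $N = 0$, then $\RR^N$ is a single point, so every subset is finite and discrete and there is nothing to prove. Likewise, if $S$ is empty or a singleton, the statement is trivial.

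Let $\sigma := \skw(S) > 0$ and fix $a \in S$. For any $b \in S$ with $b \neq a$, the definition of skewness gives $|a_k - b_k| \geq \sigma$ for every coordinate $k \in \intcc[\NN]{1}{N}$, and in particular for $k = 1$. Hence
\[
\|a - b\|_\infty \ \geq\ |a_1 - b_1| \ \geq\ \sigma .
\]
It follows that the open $\infty$-ball $\oball[\infty]{a}{\sigma}$ meets $S$ only in~$a$. All norms on $\RR^N$ induce the same topology, so this ball is an open neighbourhood of $a$ in the usual topology, and therefore $\set{a}$ is open in the subspace topology of~$S$. Since $a$ was arbitrary, $S$ is discrete. (The argument actually gives more than needed: distinct points differ by at least $\sigma$ in every coordinate, whereas a single coordinate already suffices.)

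For the second claim, let $S$ be compact and skew. The family $\set{\set{a}}{a \in S}$ is an open cover of $S$ by the first part. By compactness it has a finite subcover, and a finite union of singletons can only cover $S$ if $S$ is finite. Alternatively, one can avoid open covers by noting that the balls $\oball[\infty]{a}{\sigma/2}$ for $a \in S$ are pairwise disjoint and all lie in a bounded region, which also forces finiteness.

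There is no real obstacle in this argument. The only points that need care are the degenerate cases: the convention $\skw(S) = \infty$ for sets with at most one point, and the case $N = 0$, where the infimum in the definition of skewness is taken over an empty index set. Both are covered by the reductions in the first paragraph.
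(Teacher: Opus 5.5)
Your proof is correct and is essentially the paper's argument. You isolate each point with the open $\infty$-ball of radius $\skw(S)$, and you get finiteness of compact skew sets from the standard fact that a compact discrete space is finite, a step the paper leaves implicit.
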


\begin{proof}
If $S \subseteq \RR^N$ is skew, then for every $p \in S$ the open cube (open $\infty$-ball) with the center in~$p$ and radius $\skw(S)$ (or smaller) does not contain any points in~$S$ other than~$p$, i.e.\ every point of~$S$ is isolated.
\end{proof}

\begin{proposition}
Let $F \in \fsub(\RR^N)$.
\begin{enumerate}
\item
If $\skw(F) \in \RR$ (i.e.\ if $\card{F} \geq 2$), then
\[\skw(F) = \min\set{|a_k - b_k|}{a, b \in F \land a \neq b \land k \in \intcc[\NN]{1}{N}}\]
(that is, the infimum in the definition of skewness is in fact the minimum).
\item
$F$ is skew if and only if, for every $k \in \intcc[\NN]{1}{N}$, the map $\rstr{\prj_k}_F$ (the restriction of the $k$-th projection onto~$F$) is injective.
\end{enumerate}
\end{proposition}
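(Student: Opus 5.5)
For the first claim, I would use finiteness of the index set. When $\card{F} \geq 2$, the set
\[T := \set{|a_k - b_k|}{a, b \in F \land a \neq b \land k \in \intcc[\NN]{1}{N}}\]
is indexed by the triples $(a, b, k)$ with $a \neq b$ in $F$ and $k \in \intcc[\NN]{1}{N}$. There are only finitely many such triples, namely $\card{F}(\card{F} - 1) N$. The triple set is non-empty because $\card{F} \geq 2$ and $N \geq 1$. (If $N = 0$, the space $\RR^0$ has at most one point, so $\card{F} \geq 2$ cannot occur.) A non-empty finite subset of $\RR$ attains its infimum, so $\skw(F) = \inf T = \min T$. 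This also shows that $\skw(F) \in \RR_{\geq 0}$ exactly when $\card{F} \geq 2$, consistent with the parenthetical remark in the statement.

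For the second claim, I would split into cases by the size of $F$. If $\card{F} \leq 1$, then $\skw(F) = \infty > 0$ by convention, so $F$ is skew. Every restriction $\rstr{\prj_k}_F$ is then trivially injective, so both sides of the equivalence hold.

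If $\card{F} \geq 2$, the first claim lets me write $\skw(F) = \min T$, which makes the equivalence a chain of direct rewrites:
\begin{align*}
\skw(F) > 0
&\iff \text{every element of } T \text{ is } > 0 \\
&\iff |a_k - b_k| \neq 0 \text{ for all } a \neq b \text{ in } F \text{ and all } k \\
&\iff a_k \neq b_k \text{ whenever } a \neq b \\
&\iff \rstr{\prj_k}_F \text{ is injective for every } k.
\end{align*}
The first step uses that the minimum of a finite set of non-negative reals is positive exactly when every element is positive.

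There is no real obstacle here. The only point needing care is that the minimum in the second case genuinely exists and is attained, which is precisely what the first claim supplies. Without it, an infimum of positive numbers could be $0$; it is finiteness of $F$ that rules this out, in contrast to infinite discrete sets.
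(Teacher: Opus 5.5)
Your proof is correct and takes the same approach as the paper, whose proof is just ``Follows from the definition.'' You supply the routine details that remark leaves implicit: the index set is finite and non-empty, so the infimum is a minimum, and that minimum of non-negative numbers is positive exactly when every coordinate projection restricted to $F$ is injective.
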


\begin{proof}
Follows from the definition.
\end{proof}

Note that if $F \in \ifsub(\ell_1^N)$ is skew, then for every $r \in \intoo{0}{\frac{\skw(F)}{2}}$ the cubes in~$\cc_F(r)$ have pairwise disjoint projections on axes. The goal is now to provide an explicit formula for $\mg(\cc_F(r))$ (Theorem~\ref{theorem:cubes-weight-measure}).

\section{Weight Measure of a Union of Cubes}

In order to apply Corollary~\ref{corollary:continuity-of-magnitude-with-cubes}, we first derive a formula for the magnitude of a finite union of cubes. To obtain this, we compute the corresponding weight measure. In this section, we consider finite unions of equal-sized cubes in~$\ell_1^N$ whose coordinate projections are pairwise disjoint. More precisely, we prove the following theorem.

\begin{theorem}\label{theorem:cubes-weight-measure}
Let $N \in \NN$, let $F \in \ifsub(\ell_1^N)$ be skew, and denote $m := \card{F}$. Then, for every $r \in \intoo{0}{\frac{\skw(F)}{2}}$, the union of cubes $\cc_F(r)$ has a (unique) weight measure. Denoting this measure by~$\omega_{\cc_F(r)}$, it is given by
\[\omega_{\cc_F(r)} \ = \ \Big(\tfrac{1}{2^N}\!\!\sum_{D \in \NN_{\leq N}}\!\!\lambda_{\cc_F(r)^{(D)}}^D\Big) \ \ - \!\!\sum_{\substack{p \in F \\ s \in \set{-1, 1}^N}}\!\!\!\!\alpha_{p, s}(r)\,\delta_{p + r s}\]
where the $2^N m$ coefficients $\alpha_{p, s}(r) \in \RR$ are uniquely determined by the system of $2^N m$ linear equations
\[\Bigg(\!\sum_{\substack{p \in F \\ s \in \set{-1, 1}^N}}\!\!\!\!\alpha_{p, s}(r)\,e^{-d_1(p + r s, q + r t)} \ \ = \sum_{p \in F \setminus \set{q}}\!\!\!\!e^{-d_1(\cc_p(r), q + r t)}\ \Bigg)_{q \in F,\,t \in \set{-1, 1}^N}.\]
It follows that $\displaystyle{\mg\big(\cc_F(r)\big) = m\,(1 + r)^N - \!\!\!\!\sum_{\substack{p \in F \\ s \in \set{-1, 1}^N}}\!\!\!\!\alpha_{p, s}(r)}$.
\end{theorem}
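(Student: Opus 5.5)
Let $\mu$ denote the sum over $p \in F$ of the single-cube weight measures $\omega_{\cc_p(r)}$ from Example~\ref{example:cube-weight-measure}. Since the cubes are disjoint for $r < \skw(F)/2$, this is the first bracket in the claimed formula. The plan is to verify directly that $\omega := \mu - \sum_{p,s}\alpha_{p,s}(r)\,\delta_{p+rs}$ satisfies the weight equation
\[
\int_{\cc_F(r)} e^{-d_1(x,a)}\,d\omega(x) = 1 \quad\text{for all } a \in \cc_F(r),
\]
once the $\alpha$'s solve the stated linear system. Uniqueness of the weight measure, and the magnitude formula, then follow. Uniqueness comes from positive definiteness of $\ell_1^N$: a signed measure $\nu$ with $\int e^{-d(x,a)}d\nu(x)=0$ for all $a$ in the support gives $\iint e^{-d}\,d\nu\,d\nu = 0$, hence $\nu = 0$, since $e^{-d_1}$ is a strictly positive definite kernel as a product of one-dimensional Laplace kernels. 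The magnitude formula is $\omega(\cc_F(r)) = m(1+r)^N - \sum\alpha_{p,s}(r)$. Unique solvability of the linear system holds because its matrix is the similarity matrix of the finite set of $2^N m$ distinct vertices, which is positive definite and hence invertible.

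The key step is to compute, for $a \in \cc_q(r)$, the potential $\int e^{-d_1(x,a)}\,d\omega_{\cc_p(r)}(x)$ for $p \neq q$. I claim it equals $e^{-d_1(\cc_p(r),a)}$ times a correction that depends only on which vertex of $\cc_p(r)$ is nearest. Because the $1$-metric kernel factorises as $\prod_k e^{-|x_k-a_k|}$ and $\omega_{\cc_p(r)}$ is a product measure, the integral factorises coordinatewise. In coordinate $k$ the point $a_k$ lies outside $[p_k-r,p_k+r]$, by the disjoint-projection assumption. For such $a_k$, say $a_k > p_k+r$, the one-dimensional integral $\tfrac12\int e^{-|x-a_k|}\,d(\delta_{p_k-r}+\lambda^1+\delta_{p_k+r})$ equals $e^{-(a_k-p_k-r)}$ exactly. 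This is just the weight equation for the interval evaluated at its endpoint $p_k+r$, multiplied by $e^{-(a_k-(p_k+r))}$. So
\[
\int e^{-d_1(x,a)}\,d\omega_{\cc_p(r)}(x) = e^{-d_1(\cc_p(r),a)}.
\]
For $p = q$ the integral is $1$ by Example~\ref{example:cube-weight-measure}. Hence
\[
\int e^{-d_1(x,a)}\,d\mu(x) = 1 + \sum_{p \neq q} e^{-d_1(\cc_p(r),a)},
\]
and the Dirac correction must cancel the excess.

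The main obstacle is that this excess is a function of $a$ on all of $\cc_q(r)$, while the Dirac correction is only tuned at the $2^N$ vertices $q+rt$. I will show that both sides have the same form on $\cc_q(r)$, and the cleanest way is a separation-of-variables argument in each coordinate. Fix $p \neq q$. For $a \in \cc_q(r)$ and each $k$, the sign $\sigma_k$ of $a_k - p_k$ is constant. The nearest point of $\cc_p(r)$ is therefore the fixed vertex $v = p + r\sigma$, and
\[
e^{-d_1(\cc_p(r),a)} = \prod_k e^{-\sigma_k(a_k - p_k - r\sigma_k)},
\]
which is a product of one-dimensional exponentials $e^{\pm a_k}$. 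Likewise, each Dirac term $e^{-d_1(p+rs,a)}$ with $p \neq q$ is a product of single exponentials in the $a_k$. The terms with $p = q$ give products of $e^{-|a_k - q_k \mp r|}$, and on $[q_k-r,q_k+r]$ each of these is a combination of $e^{a_k}$ and $e^{-a_k}$. So on $\cc_q(r)$, both $\sum_{p\neq q} e^{-d_1(\cc_p(r),a)}$ and $\sum\alpha_{p,s}e^{-d_1(p+rs,a)}$ lie in the $2^N$-dimensional space spanned by the functions $\prod_k e^{\pm a_k}$. A function in this tensor-product space is determined by its values at the $2^N$ vertices $\prod_k\{q_k\pm r\}$, because in one dimension $e^{a}, e^{-a}$ are determined by their values at two distinct points, and this extends by tensoring. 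Since the linear system enforces equality exactly at the vertices $q+rt$, the two functions agree on all of $\cc_q(r)$, and the weight equation holds for every $a$.
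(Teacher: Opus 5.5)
Your proof is correct, but the key step (propagating the weight equation from the vertices $q+rt$, $t\in\{-1,1\}^N$, to all of $\cc_q(r)$) goes by a different route than the paper's.

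\textbf{The paper's route.} It argues combinatorially:
\begin{enumerate}
\item It partitions $F\setminus\{q\}$ into the corners $\cn{q,u}$.
\item It proves the splitting identity $d_1(p+rs,q+rt)=d_1(p+rs,q+ru)+d_1(ru,rt)$ for $p\in\cn{q,u}$ (Lemma~\ref{lemma:vertex-directions}).
\item It shows that the Vertex System is equivalent to the Corner System by cancelling the invertible matrix $Z_{\cc_q(r)^{(0)}}$ (Lemma~\ref{lemma:coefficients}).
\item It verifies the weight equation at a general point $q+rt$, $t\in[-1,1]^N$, by a long direct computation.
\end{enumerate}

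\textbf{Your route.} You observe that on $\cc_q(r)$ every potential involved lies in the span $V$ of the $2^N$ functions $\prod_k e^{\pm a_k}$. On $V$, evaluation at the vertices is injective, being a Kronecker product of $2\times 2$ matrices with determinant $e^{-2r}-e^{2r}\neq 0$.

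\textbf{How they relate.} The two are the same mechanism in different coordinates. Up to positive scalars, the restrictions of $a\mapsto e^{-d_1(q+ru,a)}$ to $\cc_q(r)$ are exactly your basis functions $\prod_k e^{u_k a_k}$. The matrix $Z_{\cc_q(r)^{(0)}}$ is the vertex-evaluation matrix in this basis. The Corner System says precisely that the coordinates of the difference function in this basis vanish.

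\textbf{What each buys.} Your version is shorter and avoids the corner bookkeeping. Your derivation of the leading term is also cleaner than the $\cosh$/$\sinh$ computation of Lemma~\ref{leading-term-cube-weight-measure}: the potential of an interval at an exterior point is just the endpoint weight equation times $e^{-\mathrm{dist}}$. The paper's route produces the Corner System as an explicit object. Its matrix $B(r)$ has an invertible limit as $r\decr 0$, unlike $Z_{\cc_F(r)^{(0)}}$. That is what Lemma~\ref{lemma:coefficient-limit} uses to compute $\lim_{r\decr 0}\alpha_{p,s}(r)$, and hence to prove continuity. In your framework you would get it by expanding in the basis $e^{-d_1(q+ru,\cdot)}$, but your proof of the theorem neither needs nor produces it.

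\textbf{Uniqueness.} You claim more than the paper proves, since the paper only shows that the $\alpha_{p,s}(r)$ are determined within the given ansatz. However, your justification needs one more line. Strict positive definiteness on finite point sets does not by itself give $\iint e^{-d_1}\,d\nu\,d\nu>0$ for every nonzero signed measure $\nu$. The standard fix is that $e^{-\|x\|_1}$ has everywhere positive Fourier transform $\prod_k 2/(1+\xi_k^2)$. So the double integral is a positive multiple of $\int|\hat\nu(\xi)|^2\prod_k 2/(1+\xi_k^2)\,d\xi$, which vanishes only if $\hat\nu\equiv 0$, i.e.\ $\nu=0$.
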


\begin{remark}
The weight measure of~$\cc_F(r)$ is the same as the weight measures of the individual cubes (compare with Example~\ref{example:cube-weight-measure}), except possibly in the vertices of cubes.
\end{remark}

We prove this theorem through a series of lemmas.

\begin{lemma}\label{lemma:integral-of-exponential-distance}
For all $a, b \in \RR$, $r \in \RR_{\geq 0}$ we have
\[\int_{b-r}^{b+r} e^{-|x - a|} \,dx =
\begin{cases}
2 e^{-|b - a|} \sinh(r) & \text{if $|b - a| \geq r$}, \\
2 \big(1 - e^{-r} \cosh(b - a)\big) & \text{if $|b - a| \leq r$}.
\end{cases}\]
\end{lemma}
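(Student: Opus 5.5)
This is a direct computation of a one-dimensional integral. I will substitute $u = x - a$ and set $c := b - a$, so the integral becomes $\int_{c-r}^{c+r} e^{-|u|}\,du$. The integrand depends only on $|u|$, so replacing $c$ by $-c$ leaves the value unchanged (substitute $u \mapsto -u$). Hence I may assume $c \geq 0$ and at the end write $|c| = |b-a|$. Since $\cosh$ is even, the second formula also depends only on $|c|$, so this reduction costs nothing.

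\emph{Case $c \geq r$.} The whole interval $[c-r, c+r]$ lies in $\RR_{\geq 0}$, so $e^{-|u|} = e^{-u}$ there. The integral equals
\[
e^{-(c-r)} - e^{-(c+r)} = e^{-c}\,(e^{r} - e^{-r}) = 2e^{-c}\sinh(r),
\]
which is the first case.

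\emph{Case $0 \leq c \leq r$.} Here $c - r \leq 0 \leq c + r$, so I split the integral at $0$:
\[
\int_{c-r}^{0} e^{u}\,du + \int_{0}^{c+r} e^{-u}\,du = \big(1 - e^{c-r}\big) + \big(1 - e^{-c-r}\big) = 2 - e^{-r}\,(e^{c} + e^{-c}) = 2\big(1 - e^{-r}\cosh(c)\big).
\]

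The two cases overlap at $|b-a| = r$. There both expressions equal $1 - e^{-2r}$: the first gives $2e^{-r}\sinh r = 1 - e^{-2r}$, and the second gives $2 - e^{-r}(e^{r} + e^{-r}) = 1 - e^{-2r}$. So the case split is consistent.

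No real obstacle arises. The only points needing care are the symmetry reduction to $c \geq 0$ and the consistency check on the boundary $|b-a| = r$.
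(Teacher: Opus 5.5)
Your proof is correct and is essentially the paper's argument: a direct case-by-case evaluation of the integral, splitting at the kink of $|x-a|$ when it lies inside $[b-r, b+r]$. The only difference is that the paper treats $b-a \le -r$ as a separate third case, whereas you dispose of it with the symmetry $u \mapsto -u$; your consistency check at $|b-a| = r$ is a harmless extra.
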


\begin{proof}
We calculate the given integral in cases which cover all possibilities.
\begin{itemize}
\item{Case $b - a \geq r$:}
\[\int_{b-r}^{b+r} e^{-|x - a|} \,dx = \int_{b-r}^{b+r} e^{-(x - a)} \,dx = \Big[-e^{-(x - a)}\Big]_{b-r}^{b+r} =\]
\[= -e^{-(b + r - a)} + e^{-(b - r - a)} = e^{-(b - a)} \big(e^r - e^{-r}\big) = 2 e^{-|b - a|} \sinh(r)\]
\item{Case $b - a \leq -r$:}
\[\int_{b-r}^{b+r} e^{-|x - a|} \,dx = \int_{b-r}^{b+r} e^{x - a} \,dx = \Big[e^{x - a}\Big]_{b-r}^{b+r} =\]
\[= e^{b + r - a} - e^{b - r - a} = e^{b - a} \big(e^r - e^{-r}\big) = 2 e^{-|b - a|} \sinh(r)\]
\item{Case $-r \leq b - a \leq r$:}
\[\int_{b-r}^{b+r} e^{-|x - a|} \,dx = \int_{b-r}^{a} e^{x - a} \,dx + \int_{a}^{b+r} e^{-(x - a)} \,dx =\]
\[= \Big[e^{x - a}\Big]_{b-r}^{a} + \Big[-e^{-(x - a)}\Big]_{a}^{b+r} = 1 - e^{b - r - a} - e^{-(b + r - a)} + 1 =\]
\[= 2 - e^{-r} \big(e^{b - a} + e^{-(b - a)}\big) = 2 \big(1 - e^{-r} \cosh(b - a)\big)\]
\end{itemize}
\end{proof}

\begin{lemma}\label{leading-term-cube-weight-measure}
For all $N \in \NN$, skew $F \in \ifsub(\ell_1^N)$, $r \in \intoo{0}{\frac{\skw(F)}{2}}$, $p, q \in F$ and $t \in \intcc{-1}{1}^N$ we have
\[\tfrac{1}{2^N} \int_{\cc_p(r)} e^{-d_1(x, q + r t)} \,d\big(\lambda_{\cc_F(r)^{(0)}}^0 + \ldots + \lambda_{\cc_F(r)^{(N)}}^N\big)(x) = e^{-d_1(\cc_p(r), q + r t)}.\]
\end{lemma}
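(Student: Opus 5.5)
The plan is to reduce the statement to a product of one-dimensional computations, each handled by Lemma~\ref{lemma:integral-of-exponential-distance}. In fact I will show the identity for an arbitrary point $y \in \RR^N$ in place of $q + r t$; neither $q$ nor the constraint $t \in \intcc{-1}{1}^N$ plays any role.

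\textbf{Step 1: restrict the measure to a single cube.} Since $r < \frac{\skw(F)}{2}$, for distinct $p, p' \in F$ and every $k$ the intervals $\intcc{p_k - r}{p_k + r}$ and $\intcc{p'_k - r}{p'_k + r}$ are disjoint. Hence the cubes in $\cc_F(r)$ are pairwise disjoint, and $\cc_F(r)$ is a finite disjoint union of polytopes. Its $D$-skeleton is therefore the disjoint union of the $D$-skeleta of the individual cubes. So restricted to $\cc_p(r)$, the measure $\lambda_{\cc_F(r)^{(0)}}^0 + \ldots + \lambda_{\cc_F(r)^{(N)}}^N$ coincides with $\sum_D \lambda_{\cc_p(r)^{(D)}}^D$. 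By the computation in Example~\ref{example:cube-weight-measure}, this equals the product measure
\[\prod_{k \in \intcc[\NN]{1}{N}} \big(\delta_{p_k - r} + \lambda_{\intcc{p_k - r}{p_k + r}}^1 + \delta_{p_k + r}\big).\]
This identification (faces of other cubes never touch $\cc_p(r)$) is the only place where skewness enters. It is the step I would write out most carefully, though it is not deep.

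\textbf{Step 2: factorize.} Write $y = q + r t$. The integrand factorizes as $e^{-d_1(x, y)} = \prod_k e^{-|x_k - y_k|}$. By Fubini for this finite product measure, the left-hand side becomes
\[\prod_{k} \tfrac{1}{2}\Big(e^{-|p_k - r - y_k|} + \int_{p_k - r}^{p_k + r} e^{-|x - y_k|}\,dx + e^{-|p_k + r - y_k|}\Big).\]

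\textbf{Step 3: one-dimensional computation.} Fix $k$ and set $c := |p_k - y_k|$.
\begin{itemize}
\item If $c \geq r$, both endpoints lie on the same side of $y_k$. Lemma~\ref{lemma:integral-of-exponential-distance} gives
\[e^{-(c - r)} + e^{-(c + r)} + 2 e^{-c}\sinh(r) = 2 e^{-(c - r)},\]
so the factor equals $e^{-(c - r)}$.
\item If $c \leq r$, let $u := y_k - p_k$. The sum is
\[e^{-(r - u)} + e^{-(r + u)} + 2 - 2 e^{-r}\cosh(u) = 2,\]
so the factor equals $1$.
\end{itemize}
In both cases the factor is $e^{-d(\intcc{p_k - r}{p_k + r},\, y_k)}$.

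\textbf{Step 4: assemble.} The $1$-distance from $y$ to a product of intervals is the sum of the coordinatewise distances, since the nearest point can be chosen coordinate by coordinate. Therefore the product of the factors is
\[\exp\Big(-\sum_k d\big(\intcc{p_k - r}{p_k + r}, y_k\big)\Big) = e^{-d_1(\cc_p(r), y)},\]
which is the claim.
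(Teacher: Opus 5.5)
Your proof is correct. It shares the paper's skeleton: restrict to the single cube $\cc_p(r)$, write the measure as $\prod_k\big(\delta_{p_k-r}+\lambda^1_{[p_k-r,\,p_k+r]}+\delta_{p_k+r}\big)$, factor the integral, and evaluate each factor with Lemma~\ref{lemma:integral-of-exponential-distance}. What differs is how the cases are organized.

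The paper splits globally into $p \neq q$ and $p = q$:
\begin{itemize}
\item If $p \neq q$, it uses skewness and $2r < \skw(F)$ to ensure that \emph{every} coordinate of $q + rt$ lies outside $[p_k - r, p_k + r]$. Each factor is then $2e^{r}e^{-|p_k - (q_k + rt_k)|}$, and the product $2^N e^{-(d_1(p, q+rt) - Nr)}$ is identified with $2^N e^{-d_1(\cc_p(r), q+rt)}$.
\item If $p = q$, every coordinate lies inside and each factor is $2$.
\end{itemize}

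You instead split coordinate by coordinate. You show each factor equals $e^{-d([p_k - r,\, p_k + r],\, y_k)}$ in either regime, and then use that the $1$-distance to a box is the sum of the coordinatewise distances.

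This gives you a strictly stronger statement: the identity holds for every $y \in \RR^N$, not just for $y = q + rt$. Consequently:
\begin{itemize}
\item Skewness is needed only to make the cubes disjoint (your Step~1).
\item Mixed configurations, with some coordinates of $y$ inside the cube's projection and some outside, are handled automatically. The paper's route uses skewness precisely to rule these out.
\end{itemize}
Your Step~4 also makes explicit the identity $d_1(\cc_p(r), y) = \sum_k d([p_k - r,\, p_k + r],\, y_k)$. The paper uses this tacitly, in the form $d_1(p, y) - Nr$, when all coordinates are outside.

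The paper's version is marginally shorter, since within each of its two cases all factors have the same closed form.
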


\begin{proof}
We have $\cc_p(r) = \prod_{k \in \intcc[\NN]{1}{N}} \intcc{p_k - r}{p_k + r}$, so
\begin{gather*}
\sum_{D \in \NN_{\leq N}}\!\!\lambda_{\cc_p(r)^{(D)}}^D = \prod_{k \in \intcc[\NN]{1}{N}}\!\!\Big(\lambda_{\intcc{p_k - r}{p_k + r}^{(0)}}^0 + \lambda_{\intcc{p_k - r}{p_k + r}^{(1)}}^1\Big) = \\
= \prod_{k \in \intcc[\NN]{1}{N}}\!\!\Big(\delta_{p_k - r} + \delta_{p_k + r} + \lambda_{\intcc{p_k - r}{p_k + r}}^1\Big).
\end{gather*}
Hence,
\begin{gather*}
\int_{\cc_p(r)} e^{-d_1(x, q + r t)} \,d\big(\lambda_{\cc_F(r)^{(0)}}^0 + \ldots + \lambda_{\cc_F(r)^{(N)}}^N\big)(x) = \\
= \int_{\cc_p(r)} e^{-d_1(x, q + r t)} \,d\big(\lambda_{\cc_p(r)^{(0)}}^0 + \ldots + \lambda_{\cc_p(r)^{(N)}}^N\big)(x) = \\
=\!\!\prod_{k \in \intcc[\NN]{1}{N}}\!\!\Big(e^{-|p_k - r - (q_k + r t_k)|} + e^{-|p_k + r - (q_k + r t_k)|} + \int_{p_k - r}^{p_k + r} e^{-|x_k - (q_k + r t_k)|} \,dx_k\Big).
\end{gather*}
We now split the proof into two cases.
\begin{itemize}
\item{Case $p \neq q$:}

Because $F$ is skew, we have $p_k\neq q_k$ for all $k \in \intcc[\NN]{1}{N}$. If $p_k > q_k$, then also $p_k > q_k + 2r$ since by assumption $2r < \skw(F)$. In this case
\[e^{-|p_k - r - (q_k + r t_k)|} + e^{-|p_k + r - (q_k + r t_k)|} = e^{-(p_k - r - (q_k + r t_k))} + e^{-(p_k + r - (q_k + r t_k))} =\]
\[= e^{-(p_k - (q_k + r t_k))} \big(e^r + e^{-r}\big) = 2 e^{-(p_k - (q_k + r t_k))} \cosh(r).\]
Similarly, if $p_k < q_k$, we get
\[e^{-|p_k - r - (q_k + r t_k)|} + e^{-|p_k + r - (q_k + r t_k)|} = 2 e^{p_k - (q_k + r t_k)} \cosh(r),\]
so either way
\[e^{-|p_k - r - (q_k + r t_k)|} + e^{-|p_k + r - (q_k + r t_k)|} = 2 e^{-|p_k - (q_k + r t_k)|} \cosh(r).\]
Then, using Lemma~\ref{lemma:integral-of-exponential-distance},
\[\int_{\cc_p(r)} e^{-d_1(x, q + r t)} \,d\big(\lambda_{\cc_F(r)^{(0)}}^0 + \ldots + \lambda_{\cc_F(r)^{(N)}}^N\big)(x) =\]
\[=\!\!\prod_{k \in \intcc[\NN]{1}{N}}\!\!\Big(2 e^{-|p_k - (q_k + r t_k)|} \cosh(r) + 2 e^{-|p_k - (q_k + r t_k)|} \sinh(r)\Big) =\]
\[=\!\!\prod_{k \in \intcc[\NN]{1}{N}}\!\!2 e^{-|p_k - (q_k + r t_k)|}\,e^r = 2^N e^{N r}\!\!\prod_{k \in \intcc[\NN]{1}{N}}\!\!e^{-|p_k - (q_k + r t_k)|} =\]
\[= 2^N e^{N r} e^{-\sum_{k \in \intcc[\NN]{1}{N}} |p_k - (q_k + r t_k)|} = 2^N e^{N r} e^{-d_1(p, q + r t)} =\]
\[= 2^N e^{-(d_1(p, q + r t) - N r)} = 2^N e^{-d_1(\cc_p(r), a_j + r t)}.\]
\item{Case $p = q$:}

We need to prove
\[\tfrac{1}{2^N} \int_{\cc_p(r)} e^{-d_1(x, q + r t)} \,d\big(\lambda_{\cc_F(r)^{(0)}}^0 + \ldots + \lambda_{\cc_F(r)^{(N)}}^N\big)(x) = e^{-d_1(\cc_p(r), q + r t)} = e^{-0} = 1.\]
This actually follows immediately from the fact that the restriction of $\tfrac{1}{2^N} \big(\lambda_{\cc_F(r)^{(0)}}^0 + \ldots + \lambda_{\cc_F(r)^{(N)}}^N\big)$ to~$\cc_p(r)$ is the weight measure of~$\cc_p(r)$, but we also give an explicit calculation below (again using Lemma~\ref{lemma:integral-of-exponential-distance}).
\[\int_{\cc_p(r)} e^{-d_1(x, q + r t)} \,d\big(\lambda_{\cc_F(r)^{(0)}}^0 + \ldots + \lambda_{\cc_F(r)^{(N)}}^N\big)(x) =\]
\[=\!\!\prod_{k \in \intcc[\NN]{1}{N}}\!\!\Big(e^{-|p_k - r - (q_k + r t_k)|} + e^{-|p_k + r - (q_k + r t_k)|} + \int_{p_k - r}^{p_k + r} e^{-|x_k - (q_k + r t_k)|} \,dx_k\Big) =\]
\[=\!\!\prod_{k \in \intcc[\NN]{1}{N}}\!\!\Big(e^{-r(1 + t_k)} + e^{-r(1 - t_k)} + 2 - 2 e^{-r} \cosh(r t_k)\Big) =\!\!\prod_{k \in \intcc[\NN]{1}{N}}\!\!\!\!2\,= 2^N\]
\end{itemize}
\end{proof}

\begin{definition}\label{definition:corners}
Given $N \in \NN$, $F \in \ifsub(\ell_1^N)$, $q \in F$ and $u \in \set{-1, 1}^N$, we define the \df{$u$-corner of~$F$ at~$q$} to be the set
\[\cn{q, u} := \set{p \in F}{\all{k \in \intcc[\NN]{1}{N}}{\sgn(p_k - q_k) = u_k}}.\]
That is, $\cn{q, u}$ contains all those points of~$F$ which, if we imagine the origin in~$q$, lie in the open orthant in the direction~$u$.
\end{definition}

\begin{example}
Consider $F = \set{a, b, c, d} \subseteq \ell_1^2$, as given by the following picture.
\begin{center}
\begin{tikzpicture}[scale = 0.9]
\draw[black!10, very thick] (-1, 0) -- (5, 0);
\draw[black!10, very thick] (-1, 3) -- (5, 3);
\draw[black!10, very thick] (-1, 2) -- (5, 2);
\draw[black!10, very thick] (-1, -1) -- (5, -1);
\draw[black!10, very thick] (0, -2) -- (0, 4);
\draw[black!10, very thick] (1, -2) -- (1, 4);
\draw[black!10, very thick] (3, -2) -- (3, 4);
\draw[black!10, very thick] (4, -2) -- (4, 4);
\filldraw (0, 0) circle (2pt) node [below left] {$a$};
\filldraw (1, 3) circle (2pt) node [below left] {$b$};
\filldraw (3, 2) circle (2pt) node [below left] {$c$};
\filldraw (4, -1) circle (2pt) node [below left] {$d$};
\end{tikzpicture}
\end{center}
The following is the list of all corners in this example.
\begin{align*}
&\cn{a, (-1, -1)} = \emptyset & &\cn{b, (-1, -1)} = \set{a} & &\cn{c, (-1, -1)} = \set{a} & &\cn{d, (-1, -1)} = \emptyset \\
&\cn{a, (-1, 1)} = \emptyset & &\cn{b, (-1, 1)} = \emptyset & &\cn{c, (-1, 1)} = \set{b} & &\cn{d, (-1, 1)} = \set{a, b, c} \\
&\cn{a, (1, -1)} = \set{d} & &\cn{b, (1, -1)} = \set{c, d} & &\cn{c, (1, -1)} = \set{d} & &\cn{d, (1, -1)} = \emptyset \\
&\cn{a, (1, 1)} = \set{b, c} & &\cn{b, (1, 1)} = \emptyset & &\cn{c, (1, 1)} = \emptyset & &\cn{d, (1, 1)} = \emptyset
\end{align*}
\end{example}

\begin{lemma}\label{lemma:vertex-directions}
Let $N \in \NN$ and let $F \in \ifsub(\ell_1^N)$ be skew.
\begin{enumerate}
\item\label{lemma:vertex-directions:disjoint-union}
For every $q \in F$, we have
\[\bigcup_{u \in \set{-1, 1}^N}\!\!\!\!\cn{q, u} \ = \ F \setminus \set{q},\]
and this is a disjoint union.
\item\label{lemma:vertex-directions:distance-split}
For all $r \in \intoo{0}{\frac{\skw(F)}{2}}$, $p, q \in F$, $s, t \in \intcc{-1}{1}^N$ and $u \in \set{-1, 1}^N$, if $p \in \cn{q, u}$, then
\[d_1(p + r s, q + r t) = d_1(p + r s, q + r u) + d_1(r u, r t)\]
and
\[d_1\big(\cc_p(r), q + r t\big) = d_1\big(\cc_p(r), q + r u\big) + d_1(r u, r t).\]
\end{enumerate}
\end{lemma}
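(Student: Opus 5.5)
Both parts reduce to coordinatewise statements, because $d_1$ is a sum of absolute differences of coordinates and skewness is a coordinatewise condition.

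For part~\ref{lemma:vertex-directions:disjoint-union}, take $p \in F \setminus \set{q}$. Since $F$ is skew, $p_k \neq q_k$ for every $k \in \intcc[\NN]{1}{N}$, so $\sgn(p_k - q_k) \in \set{-1, 1}$. Setting $u_k := \sgn(p_k - q_k)$ gives a vector $u \in \set{-1, 1}^N$ with $p \in \cn{q, u}$. This vector is forced by $p$, which gives disjointness. Conversely, $q$ lies in no corner, because $\sgn(q_k - q_k) = 0 \notin \set{-1, 1}$. Hence the union is exactly $F \setminus \set{q}$, and it is disjoint.

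For the first identity in part~\ref{lemma:vertex-directions:distance-split}, write
\[d_1(p + r s, q + r t) = \sum_k \big|p_k - q_k + r(s_k - t_k)\big|\]
and show the identity term by term. Fix $k$ and assume $u_k = 1$ (the case $u_k = -1$ is symmetric under negation). Then $p_k - q_k \geq \skw(F) > 2r$. Since $s_k, t_k \in \intcc{-1}{1}$, we have $p_k + r s_k \geq p_k - r > q_k + r \geq q_k + r t_k$. So $q_k + r t_k \leq q_k + r u_k = q_k + r < p_k + r s_k$, meaning the point $q_k + r u_k$ lies between the other two on the real line. Therefore
\[|p_k + r s_k - q_k - r t_k| = |p_k + r s_k - q_k - r u_k| + |r u_k - r t_k|.\]
Summing over $k$ gives the first identity.

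For the second identity, recall from the proof of Lemma~\ref{leading-term-cube-weight-measure} that $d_1(\cc_p(r), y) = \sum_k \mathrm{dist}\big(y_k, \intcc{p_k - r}{p_k + r}\big)$. This holds because the cube is a product of intervals and the $1$-metric is additive over coordinates. When $y$ is $q + r t$ or $q + r u$, we have $y_k \leq q_k + r < p_k - r$ in the case $u_k = 1$. So the coordinatewise distance equals $p_k - r - y_k$, which is $d_1(p + r s, y)$ coordinatewise with $s_k = -1$. The second identity then follows from the first with $s := -u$. Alternatively, one can repeat the betweenness argument with $p_k - r$ in place of $p_k + r s_k$.

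There is no real obstacle; the only step needing care is the strict inequality $2r < \skw(F)$. It guarantees that $q_k + r u_k$ does not overshoot the interval $\intcc{p_k - r}{p_k + r}$, and that is what makes the betweenness argument valid.
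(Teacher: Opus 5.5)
Your proof is correct and follows essentially the paper's route. Part~1 is the same sign argument, and your coordinatewise betweenness argument for the first identity is the geometric form of the paper's computation with $|x| = \sgn(x)\,x$ and $u_k^2 - u_k t_k = |u_k - t_k|$. The only variation is in the second identity: you reduce to the nearest vertex $p - ru$ (taking $s = -u$), while the paper reduces to the center via $s = 0$ and $d_1(\cc_p(r), y) = d_1(p, y) - Nr$. Your version has the small merit of explicitly checking that $y$ lies outside $\intcc{p_k - r}{p_k + r}$ in every coordinate, which the paper uses tacitly.
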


\begin{proof}
\
\begin{enumerate}
\item
That sets $\cn{q, u}$ (for a fixed~$q$) are pairwise disjoint and contained in~$F \setminus \set{q}$ is obvious from the definition. That they cover $F \setminus \set{q}$ follows from the fact that $F$ is skew.
\item
In the calculation below, we use the following facts:
\begin{itemize}
\item
$|x| = \sgn(x) \cdot x$ for all $x \in \RR$,
\item
for any $k \in \intcc[\NN]{1}{N}$, the statement $\sgn(p_k - q_k) = u_k$ implies the statements $\sgn(p_k + r s_k - (q_k + r t_k)) = u_k$ and $\sgn(p_k + r s_k - (q_k + r u_k)) = u_k$ since by assumption $r \in \intoo{0}{\frac{\skw(F)}{2}}$,
\item
$u_k^2 - u_k t_k = 1 - u_k t_k \geq 0$, so $u_k^2 - u_k t_k = |u_k^2 - u_k t_k| = |u_k(u_k - t_k)| = |u_k - t_k|$.
\end{itemize}
\begin{gather*}
d_1(p + r s, q + r t) =\!\!\sum_{k \in \intcc[\NN]{1}{N}}\!\!\big|p_k + r s_k - (q_k + r t_k)\big| = \\
=\!\!\sum_{k \in \intcc[\NN]{1}{N}}\!\!u_k\big(p_k + r s_k - (q_k + r t_k)\big) = \\
=\!\!\sum_{k \in \intcc[\NN]{1}{N}}\!\!u_k\big(p_k + r s_k - (q_k + r u_k)\big) + r\!\!\sum_{k \in \intcc[\NN]{1}{N}}\!\!(u_k^2 - u_k t_k) = \\
=\!\!\sum_{k \in \intcc[\NN]{1}{N}}\!\!\big|p_k + r s_k - (q_k + r u_k)\big| + r\!\!\sum_{k \in \intcc[\NN]{1}{N}}\!\!|u_k - t_k| = \\
= d_1(p + r s, q + r u) + d_1(r u, r t)
\end{gather*}
This shows the first claimed equality; to get the second, apply the above for $s = 0$ and calculate:
\begin{gather*}
d_1\big(\cc_p(r), q + r t\big) = d_1(p, q + r t) - N r = d_1(p, q + r u) + d_1(r u, r t) - N r = \\
= d_1\big(\cc_p(r), q + r u\big) + d_1(r u, r t).
\end{gather*}
\end{enumerate}
\end{proof}

\begin{definition}
For any $N \in \NN$, $F \in \ifsub(\ell_1^N)$ and $r \in \RR_{\geq 0}$, we define the following two systems of linear equations for unknowns~$x_{p, s}$.
\begin{samepage}
\begin{center}
\underline{Vertex~System}:
\end{center}
\[\Bigg(\!\sum_{\substack{p \in F \\ s \in \set{-1, 1}^N}}\!\!\!\!x_{p, s}\,e^{-d_1(p + r s, q + r t)} \ \ = \sum_{p \in F \setminus \set{q}}\!\!\!\!e^{-d_1(\cc_p(r), q + r t)}\ \Bigg)_{q \in F,\,t \in \set{-1, 1}^N}\]
\end{samepage}
\begin{samepage}
\begin{center}
\underline{Corner~System}:
\end{center}
\[\Bigg(x_{q, u} \ +\!\!\!\!\sum_{\substack{p \in \cn{q, u} \\ s \in \set{-1, 1}^N}}\!\!\!\!x_{p, s}\,e^{-d_1(p + r s, q + r u)} \ \ = \sum_{p \in \cn{q, u}}\!\!e^{-d_1(\cc_p(r), q + r u)}\ \Bigg)_{q \in F,\,u \in \set{-1, 1}^N},\]
\end{samepage}

Both systems have $2^N \cdot \card{F}$ equations and the same number of unknowns.
\end{definition}

\begin{example}\label{example:systems}
Take $F = \set{(0, 0), (4, 8), (7, 3)} \subseteq \ell_1^2$. Its Vertex System (in matrix form) looks like this:
{\tiny\setlength{\arraycolsep}{1pt}
\[
\begin{pmatrix}
1 & e^{-2r} & e^{-2r} & e^{-4r} & e^{-12} & e^{-(12 + 2r)} & e^{-(12 + 2r)} & e^{-(12 + 4r)} & e^{-10} & e^{-(10 + 2r)} & e^{-(10 + 2r)} & e^{-(10 + 4r)} \\
e^{-2r} & 1 & e^{-4r} & e^{-2r} & e^{-(12 - 2r)} & e^{-12} & e^{-12} & e^{-(12 + 2r)} & e^{-(10 - 2r)} & e^{-10} & e^{-10} & e^{-(10 + 2r)} \\
e^{-2r} & e^{-4r} & 1 & e^{-2r} & e^{-(12 - 2r)} & e^{-12} & e^{-12} & e^{-(12 + 2r)} & e^{-(10 - 2r)} & e^{-10} & e^{-10} & e^{-(10 + 2r)} \\
e^{-4r} & e^{-2r} & e^{-2r} & 1 & e^{-(12 - 4r)} & e^{-(12 - 2r)} & e^{-(12 - 2r)} & e^{-12} & e^{-(10 - 4r)} & e^{-(10 - 2r)} & e^{-(10 - 2r)} & e^{-10} \\

e^{-12} & e^{-(12 - 2r)} & e^{-(12 - 2r)} & e^{-(12 - 4r)} & 1 & e^{-2r} & e^{-2r} & e^{-4r} & e^{-8} & e^{-(8 - 2r)} & e^{-(8 + 2r)} & e^{-8} \\
e^{-(12 + 2r)} & e^{-12} & e^{-12} & e^{-(12 - 2r)} & e^{-2r} & 1 & e^{-4r} & e^{-2r} & e^{-(8 + 2r)} & e^{-8} & e^{-(8 + 4r)} & e^{-(8 + 2r)} \\
e^{-(12 + 2r)} & e^{-12} & e^{-12} & e^{-(12 - 2r)} & e^{-2r} & e^{-4r} & 1 & e^{-2r} & e^{-(8 - 2r)} & e^{-(8 - 4r)} & e^{-8} & e^{-(8 - 2r)} \\
e^{-(12 + 4r)} & e^{-(12 + 2r)} & e^{-(12 + 2r)} & e^{-12} & e^{-4r} & e^{-2r} & e^{-2r} & 1 & e^{-8} & e^{-(8 - 2r)} & e^{-(8 + 2r)} & e^{-8} \\

e^{-10} & e^{-(10 - 2r)} & e^{-(10 - 2r)} & e^{-(10 - 4r)} & e^{-8} & e^{-(8 + 2r)} & e^{-(8 - 2r)} & e^{-8} & 1 & e^{-2r} & e^{-2r} & e^{-4r} \\
e^{-(10 + 2r)} & e^{-10} & e^{-10} & e^{-(10 - 2r)} & e^{-(8 - 2r)} & e^{-8} & e^{-(8 - 4r)} & e^{-(8 - 2r)} & e^{-2r} & 1 & e^{-4r} & e^{-2r} \\
e^{-(10 + 2r)} & e^{-10} & e^{-10} & e^{-(10 - 2r)} & e^{-(8 + 2r)} & e^{-(8 + 4r)} & e^{-8} & e^{-(8 + 2r)} & e^{-2r} & e^{-4r} & 1 & e^{-2r} \\
e^{-(10 + 4r)} & e^{-(10 + 2r)} & e^{-(10 + 2r)} & e^{-10} & e^{-8} & e^{-(8 + 2r)} & e^{-(8 - 2r)} & e^{-8} & e^{-4r} & e^{-2r} & e^{-2r} & 1
\end{pmatrix}
\cdot
\]
\[
\cdot
\begin{pmatrix}
x_{(0, 0), (-1, -1)} \\
x_{(0, 0), (-1, 1)} \\
x_{(0, 0), (1, -1)} \\
x_{(0, 0), (1, 1)} \\

x_{(4, 8), (-1, -1)} \\
x_{(4, 8), (-1, 1)} \\
x_{(4, 8), (1, -1)} \\
x_{(4, 8), (1, 1)} \\

x_{(7, 3), (-1, -1)} \\
x_{(7, 3), (-1, 1)} \\
x_{(7, 3), (1, -1)} \\
x_{(7, 3), (1, 1)}
\end{pmatrix}
=
\begin{pmatrix}
e^{-12} + e^{-10} \\
e^{-(12 - 2r)} + e^{-(10 - 2r)} \\
e^{-(12 - 2r)} + e^{-(10 - 2r)} \\
e^{-(12 - 4r)} + e^{-(10 - 4r)} \\

e^{-(12 - 4r)} + e^{-(8 - 2r)} \\
e^{-(12 - 2r)} + e^{-8} \\
e^{-(12 - 2r)} + e^{-(8 - 4r)} \\
e^{-12} + e^{-(8 - 2r)} \\

e^{-(10 - 4r)} + e^{-(8 - 2r)} \\
e^{-(10 - 2r)} + e^{-(8 - 4r)} \\
e^{-(10 - 2r)} + e^{-8} \\
e^{-10} + e^{-(8 - 2r)}
\end{pmatrix}
\]
}

We have $\cn{(0, 0), (1, 1)} = \set{(4, 8), (7, 3)}$, $\cn{(4, 8), (-1, -1)} = \set{(0, 0)}$, $\cn{(4, 8), (1, -1)} = \set{(7, 3)}$, $\cn{(7, 3), (-1, -1)} = \set{(0, 0)}$, $\cn{(7, 3), (-1, 1)} = \set{(4, 8)}$, and the other corners are empty. Hence, the Corner System of the given~$F$ looks as follows:
{\tiny\setlength{\arraycolsep}{1pt}
\[
\begin{pmatrix}
1 & 0 & 0 & 0 & 0 & 0 & 0 & 0 & 0 & 0 & 0 & 0 \\
0 & 1 & 0 & 0 & 0 & 0 & 0 & 0 & 0 & 0 & 0 & 0 \\
0 & 0 & 1 & 0 & 0 & 0 & 0 & 0 & 0 & 0 & 0 & 0 \\
0 & 0 & 0 & 1 & e^{-(12 - 4r)} & e^{-(12 - 2r)} & e^{-(12 - 2r)} & e^{-12} & e^{-(10 - 4r)} & e^{-(10 - 2r)} & e^{-(10 - 2r)} & e^{-10} \\

e^{-12} & e^{-(12 - 2r)} & e^{-(12 - 2r)} & e^{-(12 - 4r)} & 1 & 0 & 0 & 0 & 0 & 0 & 0 & 0 \\
0 & 0 & 0 & 0 & 0 & 1 & 0 & 0 & 0 & 0 & 0 & 0 \\
0 & 0 & 0 & 0 & 0 & 0 & 1 & 0 & e^{-(8 - 2r)} & e^{-(8 - 4r)} & e^{-8} & e^{-(8 - 2r)} \\
0 & 0 & 0 & 0 & 0 & 0 & 0 & 1 & 0 & 0 & 0 & 0 \\

e^{-10} & e^{-(10 - 2r)} & e^{-(10 - 2r)} & e^{-(10 - 4r)} & 0 & 0 & 0 & 0 & 1 & 0 & 0 & 0 \\
0 & 0 & 0 & 0 & e^{-(8 - 2r)} & e^{-8} & e^{-(8 - 4r)} & e^{-(8 - 2r)} & 0 & 1 & 0 & 0 \\
0 & 0 & 0 & 0 & 0 & 0 & 0 & 0 & 0 & 0 & 1 & 0 \\
0 & 0 & 0 & 0 & 0 & 0 & 0 & 0 & 0 & 0 & 0 & 1
\end{pmatrix}
\cdot
\]
\[
\cdot
\begin{pmatrix}
x_{(0, 0), (-1, -1)} \\
x_{(0, 0), (-1, 1)} \\
x_{(0, 0), (1, -1)} \\
x_{(0, 0), (1, 1)} \\

x_{(4, 8), (-1, -1)} \\
x_{(4, 8), (-1, 1)} \\
x_{(4, 8), (1, -1)} \\
x_{(4, 8), (1, 1)} \\

x_{(7, 3), (-1, -1)} \\
x_{(7, 3), (-1, 1)} \\
x_{(7, 3), (1, -1)} \\
x_{(7, 3), (1, 1)}
\end{pmatrix}
=
\begin{pmatrix}
0 \\
0 \\
0 \\
e^{-(12 - 4r)} + e^{-(10 - 4r)} \\

e^{-(12 - 4r)} \\
0 \\
e^{-(8 - 4r)} \\
0 \\

e^{-(10 - 4r)} \\
e^{-(8 - 4r)} \\
0 \\
0
\end{pmatrix}
\]
}
\end{example}

\begin{lemma}\label{lemma:coefficients}
Let $N \in \NN$ and let $F \in \ifsub(\ell_1^N)$ be skew. Then, for every $r \in \intoo{0}{\frac{\skw(F)}{2}}$, the following statements hold.
\begin{enumerate}
\item
The coefficient matrix of the Vertex System is $Z_{\cc_F(r)^{(0)}}$ (the similarity matrix of the set of vertices in~$\cc_F(r)$)\footnote{The reason for the name ``Vertex System'' is not just that its coefficient matrix is the similarity matrix of the vertices of~$\cc_F(r)$, but also that $\alpha_{p, s}(r)$ in Theorem~\ref{theorem:cubes-weight-measure} being defined as solutions of this system amounts to the defining condition for a weight measure of~$\cc_F(r)$ in each vertex of~$\cc_F(r)$ (see the proof of Theorem~\ref{theorem:cubes-weight-measure} below). In other words, Theorem~\ref{theorem:cubes-weight-measure} essentially states: if we choose $\alpha_{p, s}(r)$ in the formula for $\omega_{\cc_F(r)}$ so that the condition for a weight measure is satisfied in the vertices of~$\cc_F(r)$, then it is satisfied everywhere on~$\cc_F(r)$.} which is invertible. Hence, the system has a unique solution.
\item
The Vertex System and the Corner System are equivalent (i.e.\ they have the same solutions). Hence, the Corner System also has a unique solution.\footnote{The fact that $\alpha_{p, s}(r)$ in Theorem~\ref{theorem:cubes-weight-measure} can equivalently be given by both the Vertex System and the Corner System is crucial for the results of this paper. It is the central part of both the argument that the given $\omega_{\cc_F(r)}$ is the weight measure of~$\cc_F(r)$, and the argument that magnitude is continuous at~$F$.}
\end{enumerate}
\end{lemma}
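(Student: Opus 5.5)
Part~1 is immediate from the definitions. The unknowns $x_{p,s}$ are indexed by the vertices $p + rs$ of the cubes. Since $r < \skw(F)/2$, the cubes are pairwise disjoint, so these $2^N m$ points are distinct and are exactly the points of $\cc_F(r)^{(0)}$. The coefficient of $x_{p,s}$ in equation $(q,t)$ is $e^{-d_1(p+rs,\,q+rt)}$, so the coefficient matrix is precisely $Z_{\cc_F(r)^{(0)}}$. Because $\ell_1^N$ is positive definite (it is tractable), this matrix is positive definite and hence invertible, and the system has a unique solution.

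For part~2, I would show that the Corner System is obtained from the Vertex System by an invertible row transformation applied to both sides. Fix $q \in F$. The key tool is Lemma~\ref{lemma:vertex-directions}(\ref{lemma:vertex-directions:distance-split}): whenever $p \in \cn{q,u}$, both $e^{-d_1(p+rs,\,q+rt)}$ and $e^{-d_1(\cc_p(r),\,q+rt)}$ factor as the corresponding quantity at $q+ru$ times $e^{-d_1(ru,\,rt)}$. For $p = q$ the coefficient is $e^{-d_1(rs,rt)}$ directly. By Lemma~\ref{lemma:vertex-directions}(\ref{lemma:vertex-directions:disjoint-union}) the sets $\cn{q,u}$ partition $F \setminus \set{q}$. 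Write $A_{q,u}$ for the left-hand side of Corner equation $(q,u)$ and $B_{q,u}$ for its right-hand side. Then Vertex equation $(q,t)$ can be rewritten as
\[\sum_{u \in \set{-1,1}^N} e^{-d_1(ru,\,rt)}\,A_{q,u} \ = \ \sum_{u \in \set{-1,1}^N} e^{-d_1(ru,\,rt)}\,B_{q,u}.\]
This works because the $p = q$ terms $\sum_s x_{q,s} e^{-d_1(rs,rt)}$ match the $x_{q,u}$ summands, and the $p \neq q$ terms regroup by corner.

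So for each fixed $q$, the block of Vertex equations equals $Z_q \cdot (A_{q,u} - B_{q,u})_u = 0$, where $Z_q = \big(e^{-d_1(ru,rt)}\big)_{t,u}$ is the similarity matrix of the vertex set of a single cube $\cc_0(r)$. This matrix is invertible, again by positive definiteness of $\ell_1^N$; it is also the $N$-fold tensor power of the $2\times2$ matrix $\begin{pmatrix}1 & e^{-2r}\\ e^{-2r} & 1\end{pmatrix}$. Hence the block holds if and only if $A_{q,u} = B_{q,u}$ for all $u$, which is exactly the Corner System at $q$. Doing this for every $q$ shows the two systems are equivalent, and uniqueness of the Corner System's solution follows from part~1.

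The main obstacle is the bookkeeping in the regrouping step. One must check carefully that the split of distances is applied with the correct roles: the vertex $p+rs$ is arbitrary with $s \in \set{-1,1}^N \subseteq \intcc{-1}{1}^N$, and the target vertex is $q+rt$. One must also check that the diagonal block ($p = q$) really reproduces $\sum_u e^{-d_1(ru,rt)} x_{q,u}$ with the same weights $e^{-d_1(ru,rt)}$ as the corner terms. Once that identity is in place, the rest is immediate.
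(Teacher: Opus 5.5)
Your proposal is correct and matches the paper's proof: part~1 identifies the coefficient matrix with $Z_{\cc_F(r)^{(0)}}$ and uses positive definiteness of $\ell_1^N$. In part~2, for each fixed $q$, Lemma~\ref{lemma:vertex-directions} shows that the block of Vertex equations is the block of Corner equations multiplied by the invertible single-cube vertex similarity matrix $\big(e^{-d_1(ru, rt)}\big)_{t,u}$, which is then cancelled (your remark that this matrix is the $N$-fold tensor power of $\begin{pmatrix}1 & e^{-2r}\\ e^{-2r} & 1\end{pmatrix}$ is a correct, self-contained extra justification of its invertibility).
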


\begin{proof}
\
\begin{enumerate}
\item
Since by assumption $r \in \intoo{0}{\frac{\skw(F)}{2}}$, the cubes $\cc_p(r)$ are pairwise disjoint, so $\cc_F(r)$ has $2^N m$ different vertices. We observe that the coefficient matrix of Vertex System is
\[\big(e^{-d_1(p + r s, q + r t)}\big)_{p, q \in F, s, t \in \set{-1, 1}^N},\]
so indeed precisely the similarity matrix of vertices~$\cc_F(r)^{(0)}$. Since the space~$\ell_1^N$ is positive definite, this matrix is positive definite, in particular invertible. Thus the given system has a unique solution.
\item
Assume that $x_{p, s}$ are solutions of the Corner System of equations, which we shorten to $\big(L_{q, u} = R_{q, u}\big)_{q \in F,\,u \in \set{-1, 1}^N}$. Then, for every $q \in F$ and $t \in \set{-1, 1}^N$ we have
\[\sum_{u \in \set{-1, 1}^N}\!\!\!\!e^{-d_1(r u, r t)} L_{q, u} \ =\!\!\sum_{u \in \set{-1, 1}^N}\!\!\!\!e^{-d_1(r u, r t)} R_{q, u}.\]
Use Lemma~\ref{lemma:vertex-directions} to evaluate both sides. Left-hand side:
\begin{gather*}
\sum_{u \in \set{-1, 1}^N}\!\!\!\!e^{-d_1(r u, r t)} L_{q, u} = \\
= \sum_{u \in \set{-1, 1}^N}\!\!\!\!e^{-d_1(r u, r t)} \bigg(x_{q, u} \ +\!\!\!\!\sum_{\substack{p \in \cn{q, u} \\ s \in \set{-1, 1}^N}}\!\!\!\!x_{p, s}\,e^{-d_1(p + r s, q + r u)}\bigg) = \\
= \Big(\sum_{u \in \set{-1, 1}^N}\!\!\!\!x_{q, u}\,e^{-d_1(q + r u, q + r t)}\Big) \ +\!\!\!\!\sum_{u \in \set{-1, 1}^N}\!\!\sum_{\substack{p \in \cn{q, u} \\ s \in \set{-1, 1}^N}}\!\!\!\!x_{p, s}\,e^{-d_1(p + r s, q + r t)} = \\
= \sum_{\substack{p \in F \\ s \in \set{-1, 1}^N}}\!\!\!\!x_{p, s}\,e^{-d_1(p + r s, q + r t)}.
\end{gather*}
Right-hand side:
\begin{gather*}
\sum_{u \in \set{-1, 1}^N}\!\!\!\!e^{-d_1(r u, r t)} R_{q, u} \ =\!\!\sum_{u \in \set{-1, 1}^N}\!\!\bigg(e^{-d_1(r u, r t)} \sum_{p \in \cn{q, u}}\!\!e^{-d_1(\cc_p(r), q + r u)}\bigg) = \\
= \sum_{u \in \set{-1, 1}^N} \sum_{p \in \cn{q, u}}\!\!e^{-d_1(\cc_p(r), q + r t)} \ = \sum_{p \in F \setminus \set{q}}\!\!\!\!e^{-d_1(\cc_p(r), q + r t)}.
\end{gather*}
We conclude that $x_{p, s}$ also solve the Vertex System.

Conversely, assume that
\[\sum_{\substack{p \in F \\ s \in \set{-1, 1}^N}}\!\!\!\!x_{p, s}\,e^{-d_1(p + r s, q + r t)} \ \ = \sum_{p \in F \setminus \set{q}}\!\!\!\!e^{-d_1(\cc_p(r), q + r t)}\]
for every $q \in F$ and $t \in \set{-1, 1}^N$. By the above calculation this is the same as
\[\sum_{u \in \set{-1, 1}^N}\!\!\!\!e^{-d_1(r u, r t)} L_{q, u} \ =\!\!\sum_{u \in \set{-1, 1}^N}\!\!\!\!e^{-d_1(r u, r t)} R_{q, u}.\]
In matrix form, this states
\[Z_{\cc_q(r)^{(0)}} \cdot L_q = Z_{\cc_q(r)^{(0)}} \cdot R_q\]
where $L_q = (L_{q, u})_{u \in \set{-1, 1}^N}$, $R_q = (R_{q, u})_{u \in \set{-1, 1}^N}$, and $Z_{\cc_q(r)^{(0)}}$ is the similarity matrix of the set of vertices of the cube~$\cc_q(r)$. As this matrix is positive definite, therefore invertible, we may cancel it to obtain $L_q = R_q$. In conclusion, solutions of the Vertex System are also solutions of the Corner System.
\end{enumerate}
\end{proof}

We can now prove Theorem~\ref{theorem:cubes-weight-measure}.
\begin{proof}[Proof of Theorem~\ref{theorem:cubes-weight-measure}]
By definition the coefficients~$\alpha_{i, s}(r)$ solve the Vertex System which we know has a unique solution by Lemma~\ref{lemma:coefficients}.

Let us verify that the given $\omega_{\cc_F(r)}$ is indeed a weight measure for~$\cc_F(r)$. That is, for every $q \in F$ and $t \in \intcc{-1}{1}^N$ we need to show
\[\int_{\cc_F(r)} e^{-d_1(x, q + r t)} d\omega_{\cc_F(r)}(x) = 1.\]
This is proved by the following calculation.

\noindent\makebox[\linewidth][l]{$\displaystyle
\begin{aligned}
&\int_{\cc_F(r)} e^{-d_1(x, q + r t)} d\omega_{\cc_F(r)}(x) = \\
&\quad\quad\text{(cubes in $\cc_F(r)$ are disjoint)} \\
&= \sum_{p \in F} \int_{\cc_p(r)} e^{-d_1(x, q + r t)} d\omega_{\cc_F(r)}(x) = \\
&\quad\quad\text{(definition of $\omega_{\cc_F(r)}$ and Lemma~\ref{leading-term-cube-weight-measure})} \\
&= \sum_{p \in F} \Big(e^{-d_1(\cc_p(r), q + r t)} - \sum_{s \in \set{-1, 1}^N} \alpha_{p, s}(r) e^{-d_1(p + r s, q + r t)}\Big) = \\
\end{aligned}$}

 \begin{align*}
&= \Big(\sum_{p \in F} e^{-d_1(\cc_p(r), q + r t)}\Big) -\!\!\sum_{\substack{p \in F \\ s \in \set{-1, 1}^N}}\!\!\!\! \alpha_{p, s}(r) e^{-d_1(p + r s, q + r t)} = \\
&= \Big(\sum_{p \in F} e^{-d_1(\cc_p(r), q + r t)}\Big) - \sum_{s \in \set{-1, 1}^N} \Big(\alpha_{q, s}(r) e^{-d_1(q + r s, q + r t)} + \sum_{p \in F \setminus \set{q}} \alpha_{p, s}(r) e^{-d_1(p + r s, q + r t)}\Big) = \\
&\quad\quad\text{(Lemma~\ref{lemma:vertex-directions}~\ref{lemma:vertex-directions:disjoint-union})} \\
&= \Big(\sum_{p \in F} e^{-d_1(\cc_p(r), q + r t)}\Big) - \sum_{s \in \set{-1, 1}^N} \Big(\alpha_{q, s}(r) e^{-d_1(r s, r t)} + \sum_{u \in \set{-1, 1}^N} \sum_{p \in \cn{q, u}} \alpha_{p, s}(r) e^{-d_1(p + r s, q + r t)}\Big) = \\
&\quad\quad\text{(Lemma~\ref{lemma:vertex-directions}~\ref{lemma:vertex-directions:distance-split})} \\
&= \Big(\sum_{p \in F} e^{-d_1(\cc_p(r), q + r t)}\Big) - \Big(\sum_{s \in \set{-1, 1}^N} \alpha_{q, s}(r) e^{-d_1(r s, r t)}\Big) - \sum_{u \in \set{-1, 1}^N}\!\!\sum_{\substack{p \in \cn{q, u} \\ s \in \set{-1, 1}^N}}\!\!\!\! \alpha_{p, s}(r) e^{-d_1(p + r s, q + r u)} e^{-d_1(r u, r t)} = \\
&= \Big(\sum_{p \in F} e^{-d_1(\cc_p(r), q + r t)}\Big) - \Big(\sum_{u \in \set{-1, 1}^N} \alpha_{q, u}(r) e^{-d_1(r u, r t)}\Big) - \sum_{u \in \set{-1, 1}^N}\!\!\sum_{\substack{p \in \cn{q, u} \\ s \in \set{-1, 1}^N}}\!\!\!\! \alpha_{p, s}(r) e^{-d_1(p + r s, q + r u)} e^{-d_1(r u, r t)} = \\
&= \Big(\sum_{p \in F} e^{-d_1(\cc_p(r), q + r t)}\Big) - \sum_{u \in \set{-1, 1}^N} \Big(\alpha_{q, u}(r) +\!\!\sum_{\substack{p \in \cn{q, u} \\ s \in \set{-1, 1}^N}}\!\!\!\! \alpha_{p, s}(r) e^{-d_1(p + r s, q + r u)}\Big) e^{-d_1(r u, r t)} = \\
&\quad\quad\text{(Lemma~\ref{lemma:coefficients})} \\
&= \Big(\sum_{p \in F} e^{-d_1(\cc_p(r), q + r t)}\Big) - \sum_{u \in \set{-1, 1}^N} \sum_{p \in \cn{q, u}} e^{-d_1(\cc_p(r), q + r u)} e^{-d_1(r u, r t)} = \\
&\quad\quad\text{(Lemma~\ref{lemma:vertex-directions}~\ref{lemma:vertex-directions:distance-split})} \\
&= \Big(\sum_{p \in F} e^{-d_1(\cc_p(r), q + r t)}\Big) - \sum_{u \in \set{-1, 1}^N} \sum_{p \in \cn{q, u}} e^{-d_1(\cc_p(r), q + r t)} = \\
&\quad\quad\text{(Lemma~\ref{lemma:vertex-directions}~\ref{lemma:vertex-directions:disjoint-union})} \\
&= \Big(\sum_{p \in F} e^{-d_1(\cc_p(r), q + r t)}\Big) - \sum_{p \in F \setminus \set{q}} e^{-d_1(\cc_p(r), q + r t)} = \\
&= e^{-d_1(\cc_q(r), q + r t)} = 1
\end{align*}

Observe that if we restrict $t$ to $\set{-1, 1}^N$, then the equalities
\[1 = \int_{\cc_F(r)} e^{-d_1(x, q + r t)} d\omega_{\cc_F(r)}(x) = \Big(\sum_{p \in F} e^{-d_1(\cc_p(r), q + r t)}\Big) -\!\!\sum_{\substack{p \in F \\ s \in \set{-1, 1}^N}}\!\!\!\! \alpha_{p, s}(r) e^{-d_1(p + r s, q + r t)}\]
amount to precisely the equations in the Vertex System. Hence, the weight measure for~$\cc_F(r)$ is indeed uniquely determined.

We calculate magnitude from the weight measure in the usual way.
\begin{gather*}
\mg\big(\cc_f(r)\big) = \int_{\cc_F(r)} d\omega_{\cc_F(r)}(x) = \\
= \tfrac{1}{2^N} \int_{\cc_F(r)} d\big(\lambda_{\cc_F(r)^{(0)}}^0 + \ldots + \lambda_{\cc_F(r)^{(N)}}^N\big)(x) \ \ - \!\!\sum_{\substack{p \in F \\ s \in \set{-1, 1}^N}}\!\!\!\!\Big(\alpha_{p, s}(r) \int_{\cc_F(r)}\!\!d\delta_{p + r s}(x)\Big) = \\
= \tfrac{1}{2^N} \sum_{p \in F} \int_{\cc_p(r)} d\big(\lambda_{\cc_p(r)^{(0)}}^0 + \ldots + \lambda_{\cc_p(r)^{(N)}}^N\big)(x) \ \ - \!\!\sum_{\substack{p \in F \\ s \in \set{-1, 1}^N}}\!\!\!\!\alpha_{p, s}(r) = \\
= \tfrac{1}{2^N} \sum_{p \in F} \prod_{k \in \intcc[\NN]{1}{N}} \int_{\intcc{p_k - r}{p_k + r}} d\big(\lambda_{\intcc{p_k - r}{p_k + r}^{(0)}}^0 + \lambda_{\intcc{p_k - r}{p_k + r}^{(1)}}^1\big)(x) \ \ - \!\!\sum_{\substack{p \in F \\ s \in \set{-1, 1}^N}}\!\!\!\!\alpha_{p, s}(r) = \end{gather*}
\begin{gather*}
= \tfrac{1}{2^N} \sum_{p \in F} \prod_{k \in \intcc[\NN]{1}{N}}\!\!(2 + 2r) \ \ - \!\!\sum_{\substack{p \in F \\ s \in \set{-1, 1}^N}}\!\!\!\!\alpha_{p, s}(r) = \\
= \tfrac{1}{2^N} \cdot m \cdot (2 + 2r)^N \ \ - \!\!\sum_{\substack{p \in F \\ s \in \set{-1, 1}^N}}\!\!\!\!\alpha_{p, s}(r) = \\
= m\,(1 + r)^N \ \ - \!\!\sum_{\substack{p \in F \\ s \in \set{-1, 1}^N}}\!\!\!\!\alpha_{p, s}(r)
\end{gather*}
\end{proof}

\section{Continuity of Magnitude at Skew Finite Subsets}

In this section we use the formula for the magnitude of a union of cubes from Theorem~\ref{theorem:cubes-weight-measure} to prove that magnitude is continuous at every skew finite subset in~$\ell_1^N$. Afterwards, we illustrate our results on several examples.

\begin{lemma}\label{lemma:coefficient-limit}
Let the notation and assumptions be as in Theorem~\ref{theorem:cubes-weight-measure}. Additionally, let $w = (w_p)_{p \in F}$ be the weighting of~$F$.
\begin{enumerate}
\item
The maps $\alpha_{p, s}\colon \intoo{0}{\frac{\skw(F)}{2}} \to \RR$ are continuous. Moreover, they have continuous extensions 
\[
\alpha_{p, s}\colon \intco{0}{\frac{\skw(F)}{2}} \to \RR,
\]
i.e.\ for every $p \in F$ and $s \in \set{-1, 1}^N$ there exists the limit $\lim_{r \decr 0} \alpha_{p, s}(r)$, and we have
\[\alpha_{p, s}(0) := \lim_{r \decr 0} \alpha_{p, s}(r) = \sum_{q \in \cn{p, s}} e^{-d_1(p, q)} w_q.\]
\item
Define $\sigma_p\colon \intco{0}{\frac{\skw(F)}{2}} \to \RR$ for each $p \in F$ as
\[\sigma_p(r) := \sum_{s \in \set{-1, 1}^N} \alpha_{p, s}(r).\]
Then $\sigma_p(0) = 1 - w_p$.
\item
We have
\[\sum_{\substack{p \in F \\ s \in \set{-1, 1}^N}}\!\!\!\!\alpha_{p, s}(0) = \sum_{p \in F} \sigma_{p}(0) = m - \mg(F).\]
\end{enumerate}
\end{lemma}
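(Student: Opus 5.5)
The plan is to work with the Corner System rather than the Vertex System. The Vertex System is unsuitable as $r \decr 0$: its coefficient matrix $Z_{\cc_F(r)^{(0)}}$ degenerates, since the $2^N$ vertices of each cube collapse to a single point and the matrix becomes singular. By Lemma~\ref{lemma:coefficients}, for $r \in \intoo{0}{\frac{\skw(F)}{2}}$ the coefficients $\alpha_{p,s}(r)$ are also the unique solution of the Corner System, which I write as $M(r)\,x = b(r)$. The entries of $M(r)$ and $b(r)$ are $0$, $1$, $e^{-d_1(p + r s, q + r u)}$ and $e^{-d_1(\cc_p(r), q + r u)} = e^{-(d_1(p, q + r u) - N r)}$. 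These formulas make sense, and are continuous in $r$, on all of $\intco{0}{\frac{\skw(F)}{2}}$. So if $M(0)$ is invertible, then $M(r)$ is invertible near $0$, and by Cramer's rule $x(r) = M(r)^{-1} b(r)$ is continuous on $\intco{0}{\frac{\skw(F)}{2}}$. Continuity on the open interval already follows from invertibility for $r > 0$, which Lemma~\ref{lemma:coefficients} gives. Hence $\alpha_{p,s}(0) := \lim_{r \decr 0} \alpha_{p,s}(r)$ exists and is the unique solution of the Corner System at $r = 0$.

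The main step, and the expected obstacle, is to show that the $r = 0$ Corner System has a unique solution and to compute it. At $r = 0$ the system reads
\[x_{q,u} + \sum_{p \in \cn{q,u}} e^{-d_1(p,q)}\,\sigma_p = \sum_{p \in \cn{q,u}} e^{-d_1(p,q)}, \qquad \text{where } \sigma_p := \sum_{s} x_{p,s}.\]
This is the key simplification: at $r = 0$ the coefficient $e^{-d_1(p,q)}$ no longer depends on $s$, so only the sums $\sigma_p$ enter. Rearranging gives $x_{q,u} = \sum_{p \in \cn{q,u}} e^{-d_1(p,q)}(1 - \sigma_p)$. Summing over $u$ and using Lemma~\ref{lemma:vertex-directions}~\ref{lemma:vertex-directions:disjoint-union} (the corners at $q$ partition $F \setminus \set{q}$) gives
\[\sigma_q = \sum_{p \neq q} e^{-d_1(p,q)}(1 - \sigma_p),\]
that is, $(1 - \sigma_q) + \sum_{p \neq q} e^{-d_1(p,q)}(1 - \sigma_p) = 1$. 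In vector form this is $Z_F (1_m - \sigma) = 1_m$, and since $F$ is positive definite, $1_m - \sigma = w$. Then $x_{q,u}$ is forced to equal $\sum_{p \in \cn{q,u}} e^{-d_1(p,q)} w_p$.

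The same argument with right-hand side $0$ shows that the homogeneous system has only the trivial solution: it yields $Z_F \sigma = 0$, hence $\sigma = 0$, hence $x = 0$. So $M(0)$ is invertible, which justifies the continuity argument of the first paragraph. Renaming $(q,u)$ as $(p,s)$ then gives the formula for $\alpha_{p,s}(0)$ in part~1.

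Parts~2 and~3 follow directly. By construction $\sigma_p(0) = \sum_s \alpha_{p,s}(0)$, which by the computation above equals $1 - w_p$. Summing over $p$ gives $\sum_p \sigma_p(0) = m - \mathrm{sum}(w) = m - \mg(F)$.
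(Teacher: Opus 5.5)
Your proposal is correct and follows essentially the same route as the paper. Both pass to the Corner System and take its $r \to 0$ limit, where only the sums $\sigma_p$ enter. Summing over $u$ then reduces the system to $Z_F(1_m - \sigma) = 1_m$, which gives $\sigma = 1_m - w$ and hence $\alpha_{p,s}(0)$.

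The only minor difference is how invertibility of the $r = 0$ matrix is obtained. You get it from the homogeneous system. The paper instead derives uniqueness and then checks explicitly that the candidate solves the system, so your version avoids that verification computation.
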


\begin{proof}
Since $\alpha_{p, s}(r)$ are solutions of the Vertex System, by Lemma~\ref{lemma:coefficients} we can express them as
\[\alpha(r) = Z_{\cc_F(r)^{(0)}}^{-1} \cdot \Big(\sum_{p \in F \setminus \set{q}}\!\!\!\!e^{-d_1(\cc_p(r), q + r t)}\Big)_{q \in F,\,t \in \set{-1, 1}^N},\]
so clearly, they are continuous as functions $\intoo{0}{\frac{\skw(F)}{2}} \to \RR$.

According to Lemma~\ref{lemma:coefficients} they are also determined by the Corner System. Let $B(r)$ denote the $(2^N m \times 2^N m)$-matrix of this system, i.e.\ we have $B(r)\,\alpha(r) = R(r)$ for $R(r) := \Big(\sum_{p \in \cn{q, t}} e^{-d_1(\cc_p(r), q + r u)}\Big)_{q \in F,\,u \in \set{-1, 1}^N}$. Since the solution of this system is unique by Lemma~\ref{lemma:coefficients}, $B(r)$ must be invertible and we have $\alpha(r) = B(r)^{-1} R(r)$.

Clearly, the entries of~$B(r)$ and~$R(r)$ have limits when $r$ goes to~$0$, and we may define $B(0) := \lim_{r \decr 0} B(r)$ and $R(0) := \lim_{r \decr 0} R(r)$. Consider the system $B(0)\,x = R(0)$. Spelling out the individual equations, we get
\[x_{q, u} + \!\!\sum_{\substack{p \in \cn{q, u} \\ s \in \set{-1, 1}^N}}\!\!\!\!x_{p, s}\,e^{-d_1(p, q)} = \sum_{p \in \cn{q, u}} e^{-d_1(p, q)}.\]
Add these equations for a fixed~$q$, letting $u$ range over~$\set{-1, 1}^N$. Taking into account Lemma~\ref{lemma:vertex-directions}~\ref{lemma:vertex-directions:disjoint-union}, we get
\begin{gather*}
\sum_{u \in \set{-1, 1}^N}\!\!\Big(x_{q, u} + \!\!\sum_{\substack{p \in \cn{q, u} \\ s \in \set{-1, 1}^N}}\!\!\!\!x_{p, s}\,e^{-d_1(p, q)}\Big) = \sum_{\substack{p \in F \\ s \in \set{-1, 1}^N}}\!\!\!\!x_{p, s}\,e^{-d_1(p, q)}
\end{gather*}
on the left-hand side, and
\begin{gather*}
\sum_{u \in \set{-1, 1}^N} \sum_{p \in \cn{q, u}} e^{-d_1(p, q)} = \sum_{p \in F \setminus \set{q}} e^{-d_1(p, q)} = \Big(\sum_{p \in F} e^{-d_1(p, q)}\Big) - 1 = \\
=\Big(\sum_{p \in F} e^{-d_1(p, q)}\Big) - \Big(\sum_{p \in F} e^{-d_1(p, q)} w_p\Big) = \sum_{p \in F} e^{-d_1(p, q)} (1 - w_p)
\end{gather*}
on the right-hand side. Denote $y_p := \sum_{s \in \set{-1, 1}^N} x_{p, s}$; we then obtain the system
\[\sum_{p \in F} e^{-d_1(p, q)} y_p = \sum_{p \in F} e^{-d_1(p, q)} (1 - w_p),\]
or in matrix form, $Z_F \cdot y = Z_F \cdot (1_m - w)$. Since $Z_F$ is positive definite and therefore invertible, we may cancel it to obtain $y = 1_m - w$. Plugging this into the original equations for~$x$, we get
\begin{gather*}
x_{q, u} = \Big(\sum_{p \in \cn{q, u}} e^{-d_1(p, q)}\Big) - \Big(\sum_{p \in \cn{q, u}} e^{-d_1(p, q)} y_p\Big) = \\
=\sum_{p \in \cn{q, u}} e^{-d_1(p, q)} (1 - y_p) = \sum_{p \in \cn{q, u}} e^{-d_1(p, q)} w_p.
\end{gather*}
We see that the system $B(0)\,x = R(0)$ has at most one solution. But the calculated candidate is indeed a solution, as demonstrated by the following calculation:
\begin{gather*}
x_{q, u} +\!\!\sum_{\substack{p \in \cn{q, u} \\ s \in \set{-1, 1}^N}}\!\!\!\!x_{p, s}\,e^{-d_1(p, q)} = \Big(\sum_{p \in \cn{q, u}} e^{-d_1(p, q)} w_p\Big) + \!\!\sum_{\substack{p \in \cn{q, u} \\ s \in \set{-1, 1}^N}}\!\!\!\!\Big(\sum_{p' \in \cn{p, s}} e^{-d_1(p', p)} w_{p'}\Big) e^{-d_1(p, q)} = \\
= \Big(\sum_{p \in \cn{q, u}} e^{-d_1(p, q)} w_p\Big) + \sum_{p \in \cn{q, u}}\!\!\Big(e^{-d_1(p, q)}\!\!\sum_{s \in \set{-1, 1}^N} \sum_{p' \in \cn{p, s}} e^{-d_1(p', p)} w_{p'}\Big) = \\
= \Big(\sum_{p \in \cn{q, u}} e^{-d_1(p, q)} w_p\Big) + \sum_{p \in \cn{q, u}}\!\!\Big(e^{-d_1(p, q)} \sum_{p' \in F \setminus \set{p}} e^{-d_1(p', p)} w_{p'}\Big) = \\
= \Big(\sum_{p \in \cn{q, u}} e^{-d_1(p, q)} w_p\Big) + \sum_{p \in \cn{q, u}}\!\!\bigg(e^{-d_1(p, q)} \Big(\big(\sum_{p' \in F} e^{-d_1(p', p)} w_{p'}\big) - w_p\Big)\bigg) = \\
= \Big(\sum_{p \in \cn{q, u}} e^{-d_1(p, q)} w_p\Big) + \sum_{p \in \cn{q, u}}\!\!\Big(e^{-d_1(p, q)} \big(1 - w_p\big)\Big) = \sum_{p \in \cn{q, u}} e^{-d_1(p, q)}.
\end{gather*}
Thus the system $B(0)\,x = R(0)$ has a unique solution, so the matrix $B(0)$ is invertible. We therefore have $\lim_{r \decr 0} \alpha(r) = \lim_{r \decr 0} B(r)^{-1} R(r) = B(0)^{-1} R(0) = x$. The above calculation also shows $\lim_{r \decr 0} \sigma(r) = y = 1_m - w$.
\end{proof}

\begin{remark}
For the strategy of the proof of Lemma~\ref{lemma:coefficient-limit}, it was important to use the Corner System. The same idea would not work with the Vertex System since the limit of its coefficient matrix $Z_{\cc_F(r)^{(0)}}$ is $Z_F$, except with each row and column repeated $2^N$~times, meaning that it is not an invertible matrix.

Before we noticed that we can use the Corner System, we tried to get the limits of $\alpha_{p, s}(r)$ by writing them as quotients of determinants via Cramer's rule and expanding both determinants into power series (doable since each of their entries is $e$ to the power of a linear function in~$r$). While doing this, we noticed a pattern that happened in every numerical experiment we tried, regardless of the number of points or the dimension of the ambient space. We record it here as a conjecture since it could potentially be interesting for other uses.
\end{remark}

\begin{conjecture}
For every skew $F \in \ifsub(\ell_1^N)$ and $r \in \intoo{0}{\frac{\skw(F)}{2}}$ we have
\[\det\big(Z_{\cc_F(r)^{(0)}}\big) = 4^k r^k + O(r^{k+1})\]
where $k := 2^{N-1} N \cdot \card{F}$ (i.e.\ $k$ is the number of $1$-dimensional faces in~$\cc_F(r)$).
\end{conjecture}

We now at last have everything ready to prove the main continuity result of this paper.

\begin{theorem}\label{theorem:skew-magnitude-continuity}
For every $N \in \NN$, the map $\mg[\icsub(\ell_1^N)]$ is continuous at every skew input.

In more detail, for every subset $F \subseteq \ell_1^N$ which is non-empty skew compact (therefore finite, by Proposition~\ref{proposition:skew-implies-discrete}) we have
\[\lim_{r \decr 0} \mg\big(\cc_F(r)\big) = \mg(F),\]
so magnitude is continuous at~$F$ by~Corollary~\ref{corollary:continuity-of-magnitude-with-cubes}.
\end{theorem}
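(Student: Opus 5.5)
The plan is to combine the explicit formula of Theorem~\ref{theorem:cubes-weight-measure} with the limits from Lemma~\ref{lemma:coefficient-limit}, and then invoke Corollary~\ref{corollary:continuity-of-magnitude-with-cubes}.

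First, let $F \subseteq \ell_1^N$ be non-empty, skew and compact. By Proposition~\ref{proposition:skew-implies-discrete}, $F$ is finite, so $F \in \ifsub(\ell_1^N)$; let $m := \card{F}$.

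If $m = 1$, then $\skw(F) = \infty$ and the interval $\intoo{0}{\frac{\skw(F)}{2}}$ is all of $\RR_{>0}$. Alternatively, we handle this case directly: $\cc_F(r)$ is a single cube with $\mg\big(\cc_F(r)\big) = (1+r)^N \to 1 = \mg(F)$ by Example~\ref{example:cube-weight-measure}. (Equivalently, all corners are empty, so the Corner System gives $\alpha_{p,s}(r) = 0$.)

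Now assume $m \geq 2$, so that $\skw(F) \in \RR_{>0}$. For every $r \in \intoo{0}{\frac{\skw(F)}{2}}$, Theorem~\ref{theorem:cubes-weight-measure} gives
\[\mg\big(\cc_F(r)\big) = m\,(1 + r)^N - \sum_{\substack{p \in F \\ s \in \set{-1, 1}^N}} \alpha_{p, s}(r).\]
We take the limit $r \decr 0$ term by term. The first term tends to $m$. The sum is finite, and by part~1 of Lemma~\ref{lemma:coefficient-limit} each $\alpha_{p,s}(r)$ has a limit $\alpha_{p,s}(0)$. By part~3 of that lemma,
\[\sum_{p,s} \alpha_{p,s}(0) = m - \mg(F).\]
Therefore
\[\lim_{r \decr 0} \mg\big(\cc_F(r)\big) = m - \big(m - \mg(F)\big) = \mg(F).\]

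Corollary~\ref{corollary:continuity-of-magnitude-with-cubes} then yields continuity of $\mg[\icsub(\ell_1^N)]$ at $F$. Every skew input of $\mg[\icsub(\ell_1^N)]$ is a non-empty compact skew set, so this covers all of them and proves the first sentence of the theorem.

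There is no real obstacle, since the substantive work was already done in Lemma~\ref{lemma:coefficient-limit}. The one point to check is that $\mg(F)$ there means the sum of the weighting $w$. This holds because $\ell_1^N$ is positive definite, so $Z_F$ is invertible and $\mg(F) = \sum_p w_p$. It also agrees with the compact definition of magnitude for positive definite spaces, which is the value the corollary uses.
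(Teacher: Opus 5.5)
Your proposal is correct and matches the paper's proof: you take the limit of $\mg\big(\cc_F(r)\big) = m(1+r)^N - \sum_{p,s}\alpha_{p,s}(r)$ from Theorem~\ref{theorem:cubes-weight-measure}, use the limits $\alpha_{p,s}(0)$ and their sum $m - \mg(F)$ from Lemma~\ref{lemma:coefficient-limit}, and conclude with Corollary~\ref{corollary:continuity-of-magnitude-with-cubes}. Treating the singleton case separately and checking that $\mg(F) = \sum_p w_p$ agrees with the compact definition is harmless extra care that the paper leaves implicit.
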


\begin{proof}
Using notation and results from Theorem~\ref{theorem:cubes-weight-measure} and Lemma~\ref{lemma:coefficient-limit}, we obtain
\begin{gather*}
\lim_{r \decr 0} \mg\big(\cc_F(r)\big) = \lim_{r \decr 0} \Big(m\,(1 + r)^N - \!\!\!\!\sum_{\substack{p \in F \\ s \in \set{-1, 1}^N}}\!\!\!\!\alpha_{p, s}(r)\Big) = \\
= \lim_{r \decr 0} \big(m\,(1 + r)^N\big) - \lim_{r \decr 0} \Big(\!\!\!\!\sum_{\substack{p \in F \\ s \in \set{-1, 1}^N}}\!\!\!\!\alpha_{p, s}(r)\Big) = m \ -\!\!\!\!\sum_{\substack{p \in F \\ s \in \set{-1, 1}^N}}\!\!\!\!\alpha_{p, s}(0) = \\
= m - \big(m - \mg(F)\big) = \mg(F).
\end{gather*}
\end{proof}

Let us now illustrate the obtained results on a few simple examples. We will use the formula for the weight measure and magnitude from Theorem~\ref{theorem:cubes-weight-measure}, where we calculate the coefficients $\alpha_{p, s}$ via either the Vertex System or the Corner System, whichever is convenient.

We start with the sanity check that in the case of a single cube, we get back the familiar result.

\begin{example}\label{example:singleton}
Let $F \subseteq \ell_1^N$ be a singleton. Then the right-hand sides of the equations in the Vertex System are sums of zero summands, so equal to~$0$. As the system has a unique solution, all alphas are~$0$. Hence
\[\omega_{\cc_F(r)} \ = \ \tfrac{1}{2^N}\!\!\sum_{D \in \NN_{\leq N}}\!\!\lambda_{C_F(r)^{(D)}}^D,\]
as expected. We then get $\mg\big(\cc_F(r)\big) = (1 + r)^N$ which, when $r$ goes to~$0$, converges to~$1$, i.e.\ magnitude of a singleton.
\end{example}

\begin{example}\label{example:skew-pair}
Let $F \subseteq \ell_1^N$ be a skew set with two elements. Up to isometric isomorphism, we may without loss of generality assume that $F = \set{0, p}$ where $p_k > 0$ for all $k \in \intcc[\NN]{1}{N}$. Then $\skw(F) = \min\set{p_k}{k \in \intcc[\NN]{1}{N}}$.

Let us assume $r \in \intoo{0}{\frac{\skw(F)}{2}}$ so that the two cubes in $\cc_F(r)$ have disjoint projections. Note that $\cn{0, 1_N} = \set{p}$, $\cn{p, -1_N} = \set{0}$, and all other corners are empty. It follows immediately from the Corner System that if $\cn{q, t} = \emptyset$ (which is the case in particular when the vertex $q + r t$ lies on the edge of the smallest rectangle containing $\cc_F(r)$), then $\alpha_{q, t} = 0$. Thus the only potentially non-zero alphas are $\alpha_{0, 1_N}$ and $\alpha_{p, -1_N}$, for which we get the equations
\begin{align*}
\alpha_{0, 1_N} \ + \sum_{s \in \set{-1, 1}^N} \alpha_{p, s}(r)\,e^{-d_1(p + r s, r 1_N)} \ &= \ e^{-d_1(\cc_p(r), r 1_N)}, \\
\alpha_{p, -1_N} \ + \sum_{s \in \set{-1, 1}^N} \alpha_{0, s}(r)\,e^{-d_1(r s, p - r 1_N)} \ &= \ e^{-d_1(\cc_0(r), p - r 1_N)}
\end{align*}
which simplify to
\begin{align*}
\alpha_{0, 1_N} \ + \ \alpha_{p, -1_N}(r)\,e^{-(\|p\|_1 - 2 N r)} \ &= \ e^{-(\|p\|_1 - 2 N r)}, \\
\alpha_{p, -1_N} \ + \ \alpha_{0, 1_N}(r)\,e^{-(\|p\|_1 - 2 N r)} \ &= \ e^{-(\|p\|_1 - 2 N r)}.
\end{align*}
From this we easily get $\alpha_{0, 1_N} = \alpha_{p, -1_N} = \frac{e^{-(\|p\|_1 - 2 N r)}}{1 + e^{-(\|p\|_1 - 2 N r)}}$. Hence
\[\omega_{\cc_F(r)} \ = \ \Big(\tfrac{1}{2^N}\!\!\sum_{D \in \NN_{\leq N}}\!\!\lambda_{\cc_F(r)^{(D)}}^D\Big) - \frac{e^{-(\|p\|_1 - 2 N r)}}{1 + e^{-(\|p\|_1 - 2 N r)}} \big(\delta_{r 1_N} + \delta_{p - r 1_N}\big)\]
and
\[\mg\big(\cc_F(r)\big) = 2 (1 + r)^N - \frac{2 e^{-(\|p\|_1 - 2 N r)}}{1 + e^{-(\|p\|_1 - 2 N r)}}.\]
In the limit we get
\[\lim_{r \decr 0} \mg\big(\cc_F(r)\big) = 2 - \frac{2 e^{-\|p\|_1}}{1 + e^{-\|p\|_1}} = \frac{2}{1 + e^{-\|p\|_1}} = \frac{2}{1 + e^{-d_1(0, p)}}\]
which is the familiar formula for magnitude of two points.

This situation is actually simple enough that we do not need the theory from this paper; we can also calculate $\mg\big(\cc_F(r)\big)$ via two applications of the inclusion-exclusion principle for magnitude~\cite[Proposition 2.3.2]{leinster2010}. Specifically, we have
\begin{gather*}
\mg\big(\cc_0(r) \cup \set{r 1_N, p - r 1_N}\big) = \\
= \mg\big(\cc_0(r)\big) + \mg\big(\set{r 1_N, p - r 1_N}\big) - \mg\big(\set{r 1_N}\big) = \\
= (1 + r)^N + \frac{2}{1 + e^{-(\|p\|_1 - 2 N r)}} - 1,
\end{gather*}
so
\begin{gather*}
\mg\big(\cc_F(r)\big) = \mg\big(\cc_0(r) \cup \set{r 1_N, p - r 1_N} \cup \cc_p(r)\big) = \\
= \mg\big(\cc_0(r) \cup \set{r 1_N, p - r 1_N}\big) + \mg\big(\cc_p(r)\big) - \mg\big(\set{p - r 1_N}\big) = \\
= (1 + r)^N + \frac{2}{1 + e^{-(\|p\|_1 - 2 N r)}} - 1 + (1 + r)^N - 1 = \\
= 2 (1 + r)^N - \frac{2 e^{-(\|p\|_1 - 2 N r)}}{1 + e^{-(\|p\|_1 - 2 N r)}},
\end{gather*}
which is the same result as before.

However, while this trick works for skew two-element sets, it fails in general, as soon as we have at least three points. For example, it would not work in Example~\ref{example:triple} below, where we need to rely on our theory.
\end{example}

\begin{example}\label{example:nonskew-pair}
It is possible to verify magnitude continuity at two-element sets with more standard methods also in the non-skew case. Without loss of generality let $F = \set{0, p} \subseteq \ell_1^N$ where $p = (p_1, \ldots, p_k, 0, \ldots, 0)$ and $p_1, \ldots, p_k > 0$. Let $P \subseteq \ell_1^k$ be the projection of~$F$ onto the first $k$ coordinates. Then $P$ is skew, so by Example~\ref{example:skew-pair}
\begin{gather*}
\mg\big(\cc_F(r)\big) = \mg\big(\cc_P(r) \times_1 \intcc{-r}{r}^{N - k}\big) = \mg\big(\cc_P(r)\big) \cdot \mg\big(\intcc{-r}{r}^{N - k}\big) = \\
= \bigg(2 (1 + r)^k - \frac{2 e^{-(\|p\|_1 - 2 k r)}}{1 + e^{-(\|p\|_1 - 2 k r)}}\bigg) \cdot (1 + r)^{N - k},
\end{gather*}
therefore again
\[\lim_{r \decr 0} \mg\big(\cc_F(r)\big) = \Big(2 - \frac{2 e^{-\|p\|_1}}{1 + e^{-\|p\|_1}}\Big) \cdot 1^{N - k} = \frac{2}{1 + e^{-\|p\|_1}} = \mg(F).\]
\end{example}

This example showed that skewness is not required for magnitude continuity. It is however relevant for the formula for weight measure in Theorem~\ref{theorem:cubes-weight-measure}, as the next example shows.

\begin{example}\label{example:planar-nonskew-pair}
Consider a special case of Example~\ref{example:nonskew-pair} where we take $F = \set{(0, 0), (a, 0)} \subseteq \ell_1^2$ with $a > 0$. The skewness of the subset $\set{0, a} \subseteq \ell_1^1$ is~$a$, so by Example~\ref{example:skew-pair} we get for $r \in \intoo{0}{\frac{a}{2}}$
\begin{gather*}
\omega_{\cc_{\set{0, a}}(r)} = \tfrac{1}{2} \big(\delta_{-r} + \delta_{r} + \delta_{a - r} + \delta_{a + r} + \lambda_{\intcc{-r}{r}}^1 + \lambda_{\intcc{a - r}{a + r}}^1\big) - \frac{e^{-(a - 2r)}}{1 + e^{-(a - 2r)}} \big(\delta_{r} + \delta_{a - r}\big) = \\
= \tfrac{1}{2} \big(\delta_{-r} + \delta_{a + r} + \lambda_{\intcc{-r}{r}}^1 + \lambda_{\intcc{a - r}{a + r}}^1\big) + \frac{1 - e^{-(a - 2r)}}{2 (1 + e^{-(a - 2r)})} \big(\delta_{r} + \delta_{a - r}\big).
\end{gather*}
Hence
\begin{gather*}
\omega_{\cc_F(r)} = \omega_{\cc_{\set{0, a}}(r)} \cdot \tfrac{1}{2} \big(\delta_{-r} + \delta_{r} + \lambda_{\intcc{-r}{r}}^1\big) = \\
= \tfrac{1}{4} \Big(\delta_{(-r, -r)} + \delta_{(-r, r)} + \delta_{(a + r, -r)} + \delta_{(a + r, r)} + \\
+ \lambda_{\set{-r} \times \intcc{-r}{r}}^1 + \lambda_{\intcc{-r}{r} \times \set{-r}}^1 + \lambda_{\intcc{-r}{r} \times \set{r}}^1 + \\
+ \lambda_{\set{a + r} \times \intcc{-r}{r}}^1 + \lambda_{\intcc{a - r}{a + r} \times \set{-r}}^1 + \lambda_{\intcc{a - r}{a + r} \times \set{r}}^1 + \\
+ \lambda_{\intcc{-r}{r} \times \intcc{-r}{r}}^2 + \lambda_{\intcc{a - r}{a + r} \times \intcc{-r}{r}}^2\Big) + \\
+ \frac{1 - e^{-(a - 2r)}}{4 (1 + e^{-(a - 2r)})} \Big(\delta_{(r, -r)} + \delta_{(r, r)} + \delta_{(a - r, -r)} + \delta_{(a - r, r)} + \lambda_{\set{r} \times \intcc{-r}{r}}^1 + \lambda_{\set{a - r} \times \intcc{-r}{r}}^1\Big).
\end{gather*}
Observe that this formula differs from the one in Theorem~\ref{theorem:cubes-weight-measure}, specifically in the coefficients in front of $\lambda_{\set{r} \times \intcc{-r}{r}}^1$ and $\lambda_{\set{a - r} \times \intcc{-r}{r}}^1$.
\end{example}

\begin{example}\label{example:triple}
Let us take a concrete example $F = \set{(0, 0), (4, 8), (7, 3)} \subseteq \ell_1^2$; then $\skw(F) = 3$. Let $r \in \intoo{0}{\frac{3}{2}}$. Recall from Example~\ref{example:systems} that alphas are given by the following Corner System.
{\tiny\setlength{\arraycolsep}{1pt}
\[
\begin{pmatrix}
1 & 0 & 0 & 0 & 0 & 0 & 0 & 0 & 0 & 0 & 0 & 0 \\
0 & 1 & 0 & 0 & 0 & 0 & 0 & 0 & 0 & 0 & 0 & 0 \\
0 & 0 & 1 & 0 & 0 & 0 & 0 & 0 & 0 & 0 & 0 & 0 \\
0 & 0 & 0 & 1 & e^{-(12 - 4r)} & e^{-(12 - 2r)} & e^{-(12 - 2r)} & e^{-12} & e^{-(10 - 4r)} & e^{-(10 - 2r)} & e^{-(10 - 2r)} & e^{-10} \\
e^{-12} & e^{-(12 - 2r)} & e^{-(12 - 2r)} & e^{-(12 - 4r)} & 1 & 0 & 0 & 0 & 0 & 0 & 0 & 0 \\
0 & 0 & 0 & 0 & 0 & 1 & 0 & 0 & 0 & 0 & 0 & 0 \\
0 & 0 & 0 & 0 & 0 & 0 & 1 & 0 & e^{-(8 - 2r)} & e^{-(8 - 4r)} & e^{-8} & e^{-(8 - 2r)} \\
0 & 0 & 0 & 0 & 0 & 0 & 0 & 1 & 0 & 0 & 0 & 0 \\
e^{-10} & e^{-(10 - 2r)} & e^{-(10 - 2r)} & e^{-(10 - 4r)} & 0 & 0 & 0 & 0 & 1 & 0 & 0 & 0 \\
0 & 0 & 0 & 0 & e^{-(8 - 2r)} & e^{-8} & e^{-(8 - 4r)} & e^{-(8 - 2r)} & 0 & 1 & 0 & 0 \\
0 & 0 & 0 & 0 & 0 & 0 & 0 & 0 & 0 & 0 & 1 & 0 \\
0 & 0 & 0 & 0 & 0 & 0 & 0 & 0 & 0 & 0 & 0 & 1
\end{pmatrix}
\cdot
\]
\[
\cdot
\begin{pmatrix}
\alpha_{(0, 0), (-1, -1)}(r) \\
\alpha_{(0, 0), (-1, 1)}(r) \\
\alpha_{(0, 0), (1, -1)}(r) \\
\alpha_{(0, 0), (1, 1)}(r) \\
\alpha_{(4, 8), (-1, -1)}(r) \\
\alpha_{(4, 8), (-1, 1)}(r) \\
\alpha_{(4, 8), (1, -1)}(r) \\
\alpha_{(4, 8), (1, 1)}(r) \\
\alpha_{(7, 3), (-1, -1)}(r) \\
\alpha_{(7, 3), (-1, 1)}(r) \\
\alpha_{(7, 3), (1, -1)}(r) \\
\alpha_{(7, 3), (1, 1)}(r)
\end{pmatrix}
=
\begin{pmatrix}
0 \\
0 \\
0 \\
e^{-(12 - 4r)} + e^{-(10 - 4r)} \\
e^{-(12 - 4r)} \\
0 \\
e^{-(8 - 4r)} \\
0 \\
e^{-(10 - 4r)} \\
e^{-(8 - 4r)} \\
0 \\
0
\end{pmatrix}
\]
}
The exact solution of this system is lengthy, but see the picture below for $r = 1$ where we wrote the numerical value (rounded to seven decimals) for each alpha at its corresponding vertex.
\begin{center}
\begin{tikzpicture}[scale = 0.8]
\filldraw[fill = black!10, thick] (-1, -1) rectangle (1, 1);
\node at (0, 0) {$(0, 0)$};
\filldraw (-1, -1) circle (2pt) node [below left] {$0$};
\filldraw (-1, 1) circle (2pt) node [above left] {$0$};
\filldraw (1, -1) circle (2pt) node [below right] {$0$};
\filldraw (1, 1) circle (2pt) node [above right] {$0.0028011$};
\filldraw[fill = black!10, thick] (3, 7) rectangle (5, 9);
\node at (4, 8) {$(4, 8)$};
\filldraw (3, 7) circle (2pt) node [below left] {$0.0003345$};
\filldraw (3, 9) circle (2pt) node [above left] {$0$};
\filldraw (5, 7) circle (2pt) node [below right] {$0.0179801$};
\filldraw (5, 9) circle (2pt) node [above right] {$0$};
\filldraw[fill = black!10, thick] (6, 2) rectangle (8, 4);
\node at (7, 3) {$(7, 3)$};
\filldraw (6, 2) circle (2pt) node [below left] {$0.0024718$};
\filldraw (6, 4) circle (2pt) node [above left] {$0.0179855$};
\filldraw (8, 2) circle (2pt) node [below right] {$0$};
\filldraw (8, 4) circle (2pt) node [above right] {$0$};
\end{tikzpicture}
\end{center}
As expected, the ``outer'' vertices have their alpha value equal to~$0$. Intuitively, alphas represent the ``reduction in weight'' (compared to if a cube was alone) due to other cubes in the corresponding direction.

Letting $r$ go to~$0$, we get the following approximation for the vector of alphas:
\[\big(0, 0, 0, 0.0000515, 0.0000061, 0, 0.0003353, 0, 0.0000454, 0.0003353, 0, 0\big).\]
Calculating $3$ minus the sum of these values, we get approximately~$2.99923$, which is indeed a numerical approximation for $\mg(F)$.
\end{example}

\section{Conclusion}

Summarizing this paper, we have done the following:
\begin{itemize}
\item
We gave a characterization of continuity at a point for magnitude in tractable spaces (Lemma~\ref{lemma:magnitude-point-continuity-characterization-in-tractable-spaces}).
\item
We gave a formula for weight measure and magnitude of a finite union of cubes in~$\ell_1^N$, assuming that projections of cubes onto coordinate axes are pairwise disjoint (Theorem~\ref{theorem:cubes-weight-measure}).
\item
We proved that magnitude in~$\ell_1^N$ is continuous at least at every skew finite subset (Theorem~\ref{theorem:skew-magnitude-continuity}).
\end{itemize}

It is clear from the definition of skewness that skew finite subsets of~$\ell_1^N$ form an open and dense subset in the set of all finite subsets of~$\ell_1^N$. In this sense, $\mg[\ifsub(\ell_1^N)]$ is continuous ``almost everywhere''.

There is a previously known general magnitude continuity result under the assumption that the domain is star-like~\cite[Theorem 5.4.15.]{LM17}. Our results go in a different direction, with domains being dispersed sets of points.

Skewness was crucially used in our proofs; as we have already seen in Example~\ref{example:planar-nonskew-pair}, the formula for the weight measure in Theorem~\ref{theorem:cubes-weight-measure} does not work in general. Nevertheless (as also suggested by Example~\ref{example:nonskew-pair}) we still believe that magnitude is continuous everywhere in~$\ell_1^N$ (and suspect that it is continuous much more generally --- see Conjectures~5.1 and~5.2 in~\cite{kalisnik2025tractablemetricspacescontinuity}).

To prove magnitude continuity for general finite sets of~$\ell_1^N$, we would want a formula for the union of cubes centered at points for which we allow matching coordinates. Example~\ref{example:planar-nonskew-pair} was easy to calculate, but the general case is much more complicated. We can have chains of points where one point shares some coordinates with the second one, the latter shares some coordinates with the third, but the first and the third do not share coordinates, or share some completely different subset. It is difficult to describe a general case, and this is still part of future work.

We further speculate that if one can derive a useful formula even for unions of cubes which are not necessarily disjoint, this would likely lead to a proof of magnitude continuity at all compact subsets of~$\ell_1^N$ and even Lipschitz continuity when restricted to a bounded part of~$\ell_1^N$ (compare with~\cite[Theorem~4.6]{kalisnik2025tractablemetricspacescontinuity}).

\bibliographystyle{plain}
\bibliography{magnitude}

\end{document}